\documentclass[11pt]{article} \usepackage[utf8]{inputenc}
\usepackage[T1]{fontenc}
\usepackage{lmodern}
\usepackage[english]{babel}

\usepackage[a4paper,vmargin={3.5cm,3.5cm},hmargin={2.5cm,2.5cm}]{geometry}
\usepackage{mathpazo}
\usepackage[font={small,sf}, labelfont={sf,bf}, margin=1cm]{caption}
\captionsetup{width=0.8\textwidth}

\usepackage[pdftex,colorlinks=true]{hyperref}
\usepackage[expansion]{microtype}
\usepackage[pdftex]{color,graphicx}
\usepackage{amsmath,amsfonts,amssymb,amsthm,mathrsfs}
\usepackage{stackrel}

\theoremstyle{plain}
\newtheorem{theorem}{Theorem}
\newtheorem{corollary}{Corollary}
\newtheorem{proposition}{Proposition}
\newtheorem{lemma}{Lemma}
\theoremstyle{definition}
\newtheorem*{definition}{Definition}
\newtheorem*{remark}{Remark}
\newtheorem*{ack}{Acknowledgments}

%%%-----------------------------------------------------------------------------------------------------------------------

\hypersetup{
    pdfauthor   = {Laurent Menard},%
    pdftitle    = {Volumes in the UIPT}}

\begin{document}

\title{\bf Volumes in the Uniform Infinite Planar Triangulation:\\
from skeletons to generating functions}
\author{\textsc{Laurent Ménard}\footnote{laurent.menard@normalesup.org}\\Modal'X, Universit\'e Paris Ouest
and LiX, \'Ecole Polytechnique}
\date{}
\maketitle

%%%----------------------------------------------------------------------------------------------------------

\begin{abstract}
We develop a method to compute the generating function of the number of vertices inside certain regions of the Uniform Infinite Planar Triangulation (UIPT). The computations are mostly combinatorial in flavor and the main tool is the decomposition of the UIPT into layers, called the skeleton decomposition, introduced by Krikun \cite{Kr}. In particular, we get explicit formulas for the generating functions of the number of vertices inside hulls (or completed metric balls) centered around the root, and the number of vertices inside geodesic slices of these hulls. We also recover known results about the scaling limit of the volume of hulls previously obtained by Curien and Le Gall by studying the peeling process of the UIPT in \cite{CLG}.
\end{abstract}

% \bigskip

% % \noindent{\bf {\textsc MSC 2010 Classification}:} Des codes ...\\
% % \noindent{\bf Keywords:} Des mots ...

% \bigskip

%%%----------------------------------------------------------------------------------------------------------

\section{Introduction and main results}

The probabilistic study of large random planar maps takes its roots in theoretical physics, where planar maps are considered as approximations of universal two dimensional random geometries in Liouville quantum gravity theory (see for instance the book \cite{ADJ}). In the past decade, a lot of work has been devoted to make rigorous sense of this idea with the construction and study of the so-called Brownian map. The surveys \cite{LG,Mi} will give the interested reader a nice overview of the field as well as an up to date list of references.

Since they are instrumental in every proof of convergence to the Brownian map, the most successful tools to study random planar maps are undoubtedly the various bijections between certain classes of maps and decorated trees. The search for such bijections was initiated by Cori and Vauquelin \cite{CV} and perfected by Schaeffer \cite{Sch}. Since then, a lot of bijections in the same spirit have been discovered, (see in particular the one by Bouttier, Di Francesco and Guitter \cite{BdFG}). These bijections are particularly well suited to study metric properties of large random maps (see the seminal work of Chassaing and Schaeffer \cite{CS}), and they have lead to the remarkable proofs of convergence in the Gromov-Hausdorff topology of wide families of random maps to the Brownian map by Le Gall \cite{LGbm} and Miermont \cite{Mibm} independently, paving the way to other results of convergence \cite{Ab,AA,BJM}.

Another very powerful tool to study random maps is the so called \emph{peeling process} -- informally a Markovian exploration procedure -- introduced by Watabiki \cite{W} and used immediately by Watabiki and Ambj{\o}rn to derive heuristics for the Hausdorff dimension of random maps in \cite{AW}. Probabilists started to show interest in this procedure a bit later, starting by Angel \cite{A}, who formalized it in the setting of the Uniform Infinite Planar Triangulation (UIPT). Since then, this process has received growing attention and proved valuable to study not only the geometry of random maps \cite{A,BCK,B,CLG}, but also random walks \cite{BC}, percolation \cite{A,AC,MN,R}, and even, to some extent, conformal aspects \cite{C}.

\bigskip

In this work we will use another tool, introduced by Krikun \cite{Kr}, to study the UIPT, called the skeleton decomposition. Before we present this tool, let us recall that a planar map is a proper embedding of a connected multi-graph in the two dimensional sphere, considered up to orientation preserving homeomorphisms. The maps we consider will always be rooted (they have a distinguished oriented edge), and we will focus on rooted triangulations of type I in the terminology of Angel and Schramm \cite{AS}, meaning that loops and multiple edges are allowed and that every face of the map is a triangle. The UIPT is the infinite random lattice defined as the local limit of uniformly distributed rooted planar triangulations with $n$ faces as $n \to \infty$ (see Angel and Schramm \cite{AS}). We will denote the UIPT by $\mathbf T _{\infty}$ and, if $M$ is a (finite) planar map, we will denote its number of vertices by $|M|$.

For every integer $r \geq 1$, the ball $B_r (\mathbf{T}_{\infty})$ is the submap of $\mathbf{T}_{\infty}$ composed of all its faces having at least one vertex at distance stricly less that $r$ from the origin of the root edge. Since the UIPT is almost surely one ended, of all the connected component of $\mathbf{T}_{\infty} \setminus B_r(\mathbf{T}_{\infty})$, only one is infinite and the hull $B_r^\bullet(\mathbf{T}_{\infty})$ is the complement in $\mathbf{T}_{\infty}$ of this unique infinite connected component (see Figure \ref{fig:hull} for an illustration). The layers of the UIPT are the sets $B_r^\bullet(\mathbf{T}_{\infty}) \setminus B_{r-1}^\bullet(\mathbf{T}_{\infty})$ for $r \geq 1$. The skeleton decomposition  of the UIPT roughly states that the geometry of the layers of the UIPT is in one-to-one correspondance with a critical branching process and a collection of independent Boltzmann (or free) triangulations with a boundary (see Figure \ref{fig:genealogy}). We will give a detailed presentation of this decomposition in Section \ref{sec:prel}.

\begin{figure}[ht!]
\begin{center}
\includegraphics[width=0.8\textwidth]{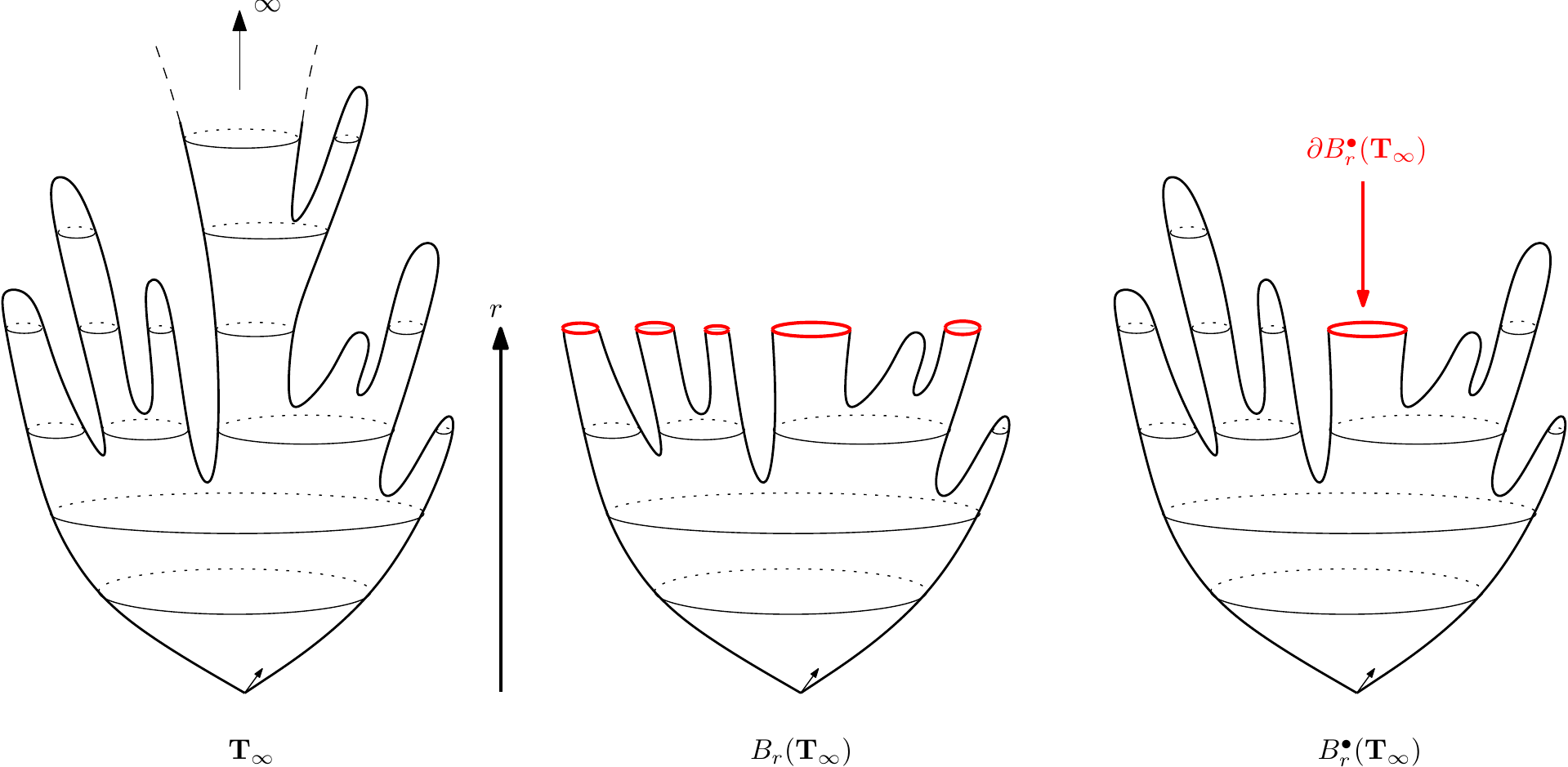}
\caption{\label{fig:hull} Illustration of the ball of radius $r$ in the UIPT and the corresponding hull.}
\end{center}
\end{figure}

This decomposition was used by Krikun in \cite{Kr} to study the length of the boundary of the hulls $B_r^\bullet(\mathbf{T}_{\infty})$ of the UIPT and in $\cite{K}$ for similar considerations on the Uniform Infinite Planar Quadrangulation. Since then, this decomposition has not received much attention with the notable exception of the recent work by Curien and Le Gall \cite{CLGfpp}, where it is used to study local modifications of the graph distance in the UIPT.

\bigskip

We will use the skeleton decomposition of the UIPT to get exact expressions for the generating functions of the number of vertices inside certain regions of hulls, starting with the hulls themselves.

\begin{theorem}
\label{th:volhull}
For any $s \in [0,1]$ and any nonnegative integer $r$ one has
\[
\mathbb{E} \left[ s^{|B^{\bullet}_r(\mathbf{T}_{\infty})| } \right] =
3^{3/2}
\frac{
\cosh 
\left( 
\sinh^{-1} 
\left( \sqrt{3\frac{(1-t)}{t}} \right)
+ r \cosh^{-1} 
\left( \sqrt{\frac{3-2t}{t}} \right)
\right)}
{\left(
\cosh^2 
\left( 
\sinh^{-1} 
\left( \sqrt{3\frac{(1-t)}{t}} \right)
+ r \cosh^{-1} 
\left( \sqrt{\frac{3-2t}{t}} \right)
\right)
+2 \right)^{3/2}}
\]
where $t$ is the unique solution in $[0,1]$ of the equation $s^2 = t^2 (3-2t)$.
\end{theorem}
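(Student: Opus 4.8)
The plan is to run the skeleton decomposition of Section~\ref{sec:prel} through a generating-function computation. By that decomposition, $B^\bullet_r(\mathbf T_\infty)$ is encoded by a pair: a random skeleton $\mathcal S_r$, a branching-process forest of height $r$ recording in particular the successive perimeters $L_0,L_1,\dots,L_r$ of $\partial B^\bullet_0,\dots,\partial B^\bullet_r$, whose law (read in the appropriate direction) is that of a critical Galton--Watson process suitably conditioned so that $L_r\to\infty$; and, conditionally on $\mathcal S_r$, a family of independent Boltzmann triangulations with a boundary, one per ``slot'' of $\mathcal S_r$, the perimeter of each slot being read off from $\mathcal S_r$. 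Since every vertex of $B^\bullet_r(\mathbf T_\infty)$ is either a vertex of $\mathcal S_r$ (equivalently, lies on some $\partial B^\bullet_i$) or sits strictly inside exactly one of the slot-triangulations, $|B^\bullet_r(\mathbf T_\infty)|$ splits as $\#\mathcal S_r$ (the number of vertices of the skeleton) plus the sum over slots of the numbers of inner vertices of the corresponding triangulations. Writing $W_p(s)$ for the generating function of the number of inner vertices of a Boltzmann triangulation with a boundary of perimeter $p$, and using the conditional independence, this gives
\[
\mathbb E\!\left[s^{|B^\bullet_r(\mathbf T_\infty)|}\right]
= \mathbb E\!\left[\, s^{\#\mathcal S_r}\,\prod_{u\ \text{slot of }\mathcal S_r}\frac{W_{p(u)}(s)}{W_{p(u)}(1)}\,\right],
\]
the outer expectation being over the law of $\mathcal S_r$ only.

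Next I would bring in the classical closed form for $W_p(s)$ coming from the enumeration of triangulations with a boundary: up to explicit polynomial prefactors in $p$ it is a power $\rho(s)^{p}$ for an algebraic function $\rho(s)$, with $W_p(1)$ the corresponding critical mass. The offspring mechanism of $\mathcal S_r$ is itself manufactured out of these same Boltzmann triangulations (each admissible offspring pattern comes with a slot to be filled), so the reweighting by $\prod_u W_{p(u)}(s)/W_{p(u)}(1)$ simply \emph{tilts the branching mechanism}: it turns the critical offspring law into an explicit $s$-dependent (subcritical) one, up to a tractable correction produced by the $s^{\#\mathcal S_r}$ factor and by the conditioning that forces $L_r\to\infty$ (an $h$-transform with an explicit harmonic function of the perimeter). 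It is then natural to keep track of the perimeter as well: I would introduce the two-variable generating function $F_r(x,s):=\mathbb E[x^{L_r}s^{|B^\bullet_r(\mathbf T_\infty)|}]$, for which the skeleton decomposition yields a one-step recursion $F_{r+1}(\cdot\,,s)=\Lambda_s\big(F_r(\cdot\,,s)\big)$, the operator $\Lambda_s$ encoding exactly one layer of skeleton — tilted offspring, $h$-transform weight, and slot fillings — and from which the quantity of interest is recovered as $\mathbb E[s^{|B^\bullet_r(\mathbf T_\infty)|}]=F_r(1,s)$.

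It remains to iterate $\Lambda_s$ precisely $r$ times. This is where the substitution $s^2=t^2(3-2t)$ is used: it is exactly the change of variable that makes $\rho(s)$, the critical data, and the mean of the tilted offspring law rational in $t$, and — after a further trigonometric change of coordinate — turns the iteration of $\Lambda_s$ into the affine translation $\theta\mapsto\theta+\lambda$, with step $\lambda=\cosh^{-1}\!\big(\sqrt{(3-2t)/t}\big)$ dictated by the tilted branching mechanism and starting point $\theta_0=\sinh^{-1}\!\big(\sqrt{3(1-t)/t}\big)$ fixed by the $r=0$ value. Applying the $r$ translations and returning to the variable $s$ produces the stated formula; the global constant $3^{3/2}$ and the exponent $3/2$ are the residue of the $W_p(1)$-normalisations, equivalently of the $3/2$-stable flavour of the perimeter process. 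As a sanity check on the bookkeeping, at $s=1$ one has $t=1$, hence $\theta_0=\lambda=0$, and the right-hand side collapses to $3^{3/2}/(1+2)^{3/2}=1$.

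I expect the main obstacle to be the combinatorial bookkeeping inside the skeleton decomposition: identifying precisely which vertices are shared between consecutive layers and between the skeleton and the slot-triangulations so that nothing is counted twice, and incorporating correctly the $h$-transform conditioning that makes $L_r$ grow — this harmonic factor is what replaces a pure exponential by the $\cosh/(\cosh^2+2)^{3/2}$ shape. The second delicate point is the exact linearisation: verifying that the iterated rational operator $\Lambda_s$ genuinely becomes a translation under the $t$-parametrisation rather than merely asymptotically, which is what makes a closed form available and where I expect the algebra to require the most care.
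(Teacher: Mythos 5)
Your plan follows essentially the same route as the paper: the skeleton decomposition reduces everything to a weighted sum over forests, the reweighting of the Boltzmann slots under the parametrization $s^2=t^2(3-2t)$ turns the layer weights into a genuine tilted offspring law $\varphi_t$, and its $r$-fold iterate linearizes exactly under a hyperbolic change of variable, with the $(\cdot)^{-3/2}$ prefactor and the constant $3^{3/2}$ coming from the perimeter weights $\alpha^qC(q)/q$. The only cosmetic difference is that the paper works directly with the finite-hull law of Lemma~\ref{lem:lawhull} (a Galton--Watson forest from $q$ roots conditioned to have a single vertex at generation $r$, summed over $q$) rather than with an $h$-transform conditioning $L_r\to\infty$, but the harmonic weights play exactly the role you assign them.
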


An easy consequence of this Theorem is the scaling limit
\[
\lim_{R \to \infty}
\mathbb{E} \left[ e^{- \lambda|B^{\bullet}_{\lfloor x R \rfloor}(\mathbf{T}_{\infty})|/R^4 } \right]
\]
already obtained in \cite{CLG} via the peeling process. Indeed, put $s=e^{- \lambda / R^4}$ and $r = \lfloor x R \rfloor$ for some $\lambda,x > 0$ and some integer $R$. Then
\[
t = 1 - \frac{\sqrt{2 \lambda /3}}{R^2} + o(R^{-2})
\]
and
\[
\cosh^{-1} 
\left( \sqrt{\frac{3-2t}{t}} \right) \sim \sqrt{\frac{3-2t}{t} - 1} \sim \frac{(6 \lambda)^{1/4}}{R}
\]
giving
\[
\lim_{R \to \infty}
\mathbb{E} \left[ e^{- \lambda|B^{\bullet}_{\lfloor x R \rfloor}(\mathbf{T}_{\infty})|/R^4 } \right] = 3^{3/2} \frac{\cosh \left( (6 \lambda)^{1/4} x \right)}{\left(\cosh^2 \left((6 \lambda)^{1/4} x \right) +2 \right)^{3/2}}
\]
in accordance with \cite{CLG,CLGplane} for type I triangulations.

\bigskip

We also get an explicit expression for the generating function of the volume of hulls conditionally on their perimeter (see Proposition \ref{prop:layervol} for a precise statement). This allows to recover the following scaling limit, already appearing in \cite{CLGplane}, Theorem 1.4, as the Laplace transform of the volume of hulls of the Brownian plane conditionally on the perimeter.
\begin{corollary}
\label{cor:hullcond}
Fix $x, \ell >0$, then, for any $\lambda >0$, one has
\begin{align*}
\lim_{R \to \infty}
\mathbb{E} & \left[ e^{- \lambda|B^{\bullet}_{\lfloor x R \rfloor}(\mathbf{T}_{\infty})|/R^4 } \middle| |\partial B^{\bullet}_{\lfloor x R \rfloor}(\mathbf{T}_{\infty})| = \lfloor \ell R^2 \rfloor \right] \\
& \quad \quad = 
x^3 \left( 6 \lambda \right)^{3/4} \frac{\cosh \left( \left( 6 \lambda \right)^{1/4} x \right)}{\sinh^3 \left( \left( 6 \lambda \right)^{1/4} x \right)}
\exp \left( - \ell \left( (6 \lambda)^{1/4} \left( \coth^2 \left( (6 \lambda)^{1/4} x \right) -\frac{2}{3}\right) - \frac{1}{x^2} \right) \right).
\end{align*}
\end{corollary}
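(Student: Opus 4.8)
The plan is to derive Corollary~\ref{cor:hullcond} from the exact conditional generating function established in Proposition~\ref{prop:layervol} by a direct asymptotic analysis under the scaling $s = e^{-\lambda/R^4}$, $r = \lfloor xR\rfloor$, $p = \lfloor \ell R^2\rfloor$. First I would record that, with this choice, the parameter $t = t(s)$ defined by $s^2 = t^2(3-2t)$ satisfies $t = 1 - \sqrt{2\lambda/3}\,R^{-2} + o(R^{-2})$, so that $\sinh^{-1}(\sqrt{3(1-t)/t}) = o(1)$ while $r\,\cosh^{-1}(\sqrt{(3-2t)/t}) \to (6\lambda)^{1/4}x$. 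These are precisely the estimates already used in the excerpt to pass from Theorem~\ref{th:volhull} to its scaling limit, and they will be reused here; the only new phenomenon is the appearance of the perimeter $p \asymp R^2$ as an exponent, which amplifies $R^{-2}$-size quantities to a constant scale.

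Next I would unwind the formula of Proposition~\ref{prop:layervol}. By the skeleton decomposition, conditionally on $|\partial B^{\bullet}_r(\mathbf{T}_{\infty})| = p$ the hull is built from $p$ genealogical strands of independent and identical law, each carrying its own collection of Boltzmann triangulations, so it is natural that the conditional generating function carries a factor $q(t,r)^{p}$, a $p$-th power of a per-strand contribution, times a prefactor $C(t,r)$ that does not depend on $p$. The proof then splits into two computations. For the prefactor, plugging the above asymptotics into $C(t,r)$ should directly produce $x^3(6\lambda)^{3/4}\cosh((6\lambda)^{1/4}x)/\sinh^3((6\lambda)^{1/4}x)$, which only requires a leading-order evaluation. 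For the $p$-th power, one writes $q(t,r)^{p} = \exp(p\log q(t,r))$, so the limit reduces to computing $\lim_{R\to\infty} p\log q(t,r)$; Taylor-expanding $\log q(t,r)$ to the order $R^{-2}$ and identifying the coefficient should give $\lim_{R\to\infty} p\log q(t,r) = -\ell((6\lambda)^{1/4}(\coth^2((6\lambda)^{1/4}x) - \tfrac23) - \tfrac1{x^2})$, the term $-1/x^2$ reflecting the normalisation of the conditional law by $\mathbb{P}(|\partial B^{\bullet}_r(\mathbf{T}_{\infty})| = p)$ that is built into the formula of Proposition~\ref{prop:layervol}. Multiplying the two limits yields the announced expression, which matches the Laplace transform of the hull volume of the Brownian plane given its perimeter in \cite{CLGplane}, Theorem~1.4; a useful consistency check is that integrating this conditional limit against the limiting law of $|\partial B^{\bullet}_{\lfloor xR\rfloor}(\mathbf{T}_{\infty})|/R^2$ should recover the scaling limit following Theorem~\ref{th:volhull}.

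The step I expect to be the main obstacle is the error control in the $p$-th power. Since $p \asymp R^2$, a remainder of size $o(R^{-2})$ in $\log q(t,r)$ is not automatically harmless: one needs it to be $o(R^{-2})$ with enough uniformity that $p \cdot o(R^{-2}) \to 0$. Concretely this forces one to expand both the algebraic function of $t$ occurring inside $q(t,r)$ and the transcendental functions $\sinh^{-1}, \cosh^{-1}$ to second order, to verify that the $R^{-2}$-coefficient is exactly $(6\lambda)^{1/4}(\coth^2((6\lambda)^{1/4}x) - \tfrac23) - \tfrac1{x^2}$, and to check that the genuine $O(R^{-3})$ and $O(R^{-4})$ corrections do not survive multiplication by $R^2$; this is where the somewhat heavy but otherwise routine computation lives, whereas the prefactor $C(t,r)$ only needs leading-order asymptotics. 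A secondary point worth checking is that any $p$-dependent correction to the clean ``$q^{p}C$'' shape of Proposition~\ref{prop:layervol} --- for instance a polynomial-in-$p$ factor arising from conditioning the skeleton forest --- is itself negligible under this scaling.
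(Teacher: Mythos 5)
Your proposal is correct and follows essentially the same route as the paper: specialize Proposition~\ref{prop:layervol} to $r'=0$, $p=1$, obtaining the exact conditional generating function $s\,t^{q-1}\bigl(\varphi_t^{\{r\}}(0)/\varphi_1^{\{r\}}(0)\bigr)^{q-1}\varphi_t^{\{r\}\prime}(0)/\varphi_1^{\{r\}\prime}(0)$, then expand $t$, $\varphi_t^{\{r\}}(0)$ and $\varphi_1^{\{r\}}(0)$ to order $R^{-2}$ with $O(R^{-4})$ remainders (available from the closed forms in Lemma~\ref{lem:ExpPhin}) so that the $(q-1)$-st power converges to the exponential factor, and evaluate the derivative ratio to leading order for the prefactor. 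Your flagged concerns (uniform $o(R^{-2})$ control in the exponent, and absence of hidden $q$-dependent corrections) are exactly the points the paper's exact formula and explicit iterates dispose of.
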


\bigskip

Our approach also allows us to compute the exact generating function of the difference of volume between hulls of the UIPT (see Proposition \ref{prop:layervol}), and then recover one of the main results of \cite{CLG}, namely the scaling limit of the volumes of Hulls to a stochastic process. This convergence holds jointly with the scaling limit of the perimeter of the hulls and we need to introduce some notation taken from \cite{CLG} to state it.

Let $(X_t)_{t \geq 0}$ be the Feller Markov process with values in $\mathbb R _+$ whose semigroup is characterized by
\[
\mathbb E \left[ e^{- \lambda X_t} \middle| X_0 = x \right]
= \exp \left( -x \left( \lambda^{-1/2} +t/2 \right)^{-2} \right)
\]
for every $x,t \geq 0$ and $\lambda >0$. The process $X$ is a continuous time branching process with branching mechanism given by $u \mapsto u^{3/2}$. As explain in \cite{CLGplane}, one can construct a stochastic process $(\mathcal L_t)_{t \geq 0}$ with \emph{c\`adl\`ag} paths such that the time-reversed process $(\mathcal L_{(-t)-} )_{t\leq 0}$ is distributed as $X$ "started from $+\infty$ at time $-\infty$" and conditioned to hit $0$ at time $0$. We also let $(\xi_i)_{i\geq 1}$ be a sequence of independent real valued random variables with density
\[
\frac{1}{\sqrt{2 \pi x^5}} e^{-\frac{1}{2x}} \mathbf{1}_{\{x >0\}}
\]
and assume that this sequence is independent of the process $\mathcal L$. Finally we set
\[
\mathcal M_t = \sum_{s_i \leq t} \xi_i \left( \Delta \mathcal L _{s_i} \right)^2,
\]
where $(s_i)_{i \geq 1}$ is a measurable enumeration of the jumps of $\mathcal L$. We recover the following result, first proved in \cite{CLG} by studying the peeling process of the UIPT:

\begin{theorem}[\cite{CLG}, scaling limit of the hull process]
\label{th:hullproc}
We have the following convergence in distribution in the sense of Skorokhod:
\[
\left(R^{-2} |\partial B^{\bullet}_{\lfloor x R \rfloor}(\mathbf{T}_{\infty})|, R^{-4}|B^{\bullet}_{\lfloor x R \rfloor}(\mathbf{T}_{\infty})| \right)_{x \geq 0}
\xrightarrow[R \to \infty]{(d)}
\left(3^2 \cdot \mathcal L_x,4 \cdot 3^3 \cdot \mathcal M_x\right)_{x \geq 0}.
\]
\end{theorem}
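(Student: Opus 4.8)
The plan is to combine the skeleton decomposition of Section~\ref{sec:prel} with the exact generating functions of Proposition~\ref{prop:layervol}, following the overall strategy of \cite{CLG} but with combinatorics in place of the peeling process. The starting point is that, by the skeleton decomposition, for every $r$ the perimeter $|\partial B^{\bullet}_r(\mathbf T_\infty)|$ is the size of generation $r$ of the critical branching process attached to the skeleton, while
\[
|B^{\bullet}_r(\mathbf T_\infty)| = \bigl(\text{vertices carried by the skeleton in layers } 1,\dots,r\bigr) + \sum_{f} V_f ,
\]
where $f$ ranges over the slots of the skeleton in those layers and, conditionally on the skeleton, the $V_f$ are independent, $V_f$ being distributed as the number of inner vertices of a free Boltzmann triangulation with a boundary whose perimeter is the label assigned to $f$ by the skeleton. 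Everything then reduces to understanding the joint scaling of the branching process and of these slot volumes.

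The first ingredient is the scaling limit of the perimeter process. The offspring law of the skeleton branching process is critical with a tail of index $5/2$, hence lies in the domain of attraction of the spectrally positive $3/2$-stable law; the $h$-transform conditioning the triangulation to be infinite turns the generation sizes into the process whose scaling limit (space rescaled by $R^{-2}$, $r$ by $R^{-1}$) is the time-reversed, conditioned continuous-state branching process $X$ with branching mechanism $u\mapsto u^{3/2}$. This gives $\bigl(R^{-2}|\partial B^{\bullet}_{\lfloor xR\rfloor}(\mathbf T_\infty)|\bigr)_{x\ge 0}\to\bigl(3^2\,\mathcal L_x\bigr)_{x\ge 0}$ in the Skorokhod sense, essentially Krikun's theorem from \cite{Kr}; the branching mechanism and the constant $3^2$ are also visible in the computation following Theorem~\ref{th:volhull}.

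The second ingredient is to read the volume off the perimeter skeleton. Split the slot sum according to the perimeter of $f$: slots of bounded perimeter number $O\bigl(|\partial B^{\bullet}_r|\bigr)=O(R^2)$ per layer and carry $O(1)$ vertices each, so their total over the $O(R)$ relevant layers is $O(R^3)=o(R^4)$ and vanishes after dividing by $R^4$, a bound one makes uniform in $x$ over compact sets by a second-moment estimate. The slots that survive the rescaling are those of perimeter of order $R^2$, and these are asymptotically in bijection with the macroscopic jumps of the rescaled perimeter process: a jump $\Delta\mathcal L_{s_i}$ corresponds to a slot of perimeter $\asymp R^2\,\Delta\mathcal L_{s_i}$ located in layer $\asymp s_i R$. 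A free Boltzmann triangulation with boundary of large perimeter $p$ has $\asymp p^2\,\xi$ inner vertices, where $\xi$ has the density $\tfrac{1}{\sqrt{2\pi x^5}}e^{-1/(2x)}\mathbf 1_{\{x>0\}}$ — the scaling limit of the Boltzmann volume given the perimeter, a standard computation also underlying Proposition~\ref{prop:layervol} — and, conditionally on the skeleton, these volumes are independent and independent of $\mathcal L$; hence $R^{-4}|B^{\bullet}_{\lfloor xR\rfloor}(\mathbf T_\infty)|\to 4\cdot3^3\,\mathcal M_x$, jointly with the perimeter.

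To assemble the joint Skorokhod convergence I would first establish convergence of the finite-dimensional distributions of the pair by conditioning on the rescaled skeleton — the first coordinate via the second paragraph, the second via the slot decomposition and the conditional independence of the $V_f$ — identifying the limit through the explicit Laplace transforms of Proposition~\ref{prop:layervol}, and then upgrade to functional convergence: tightness of the perimeter coordinate is Krikun's, and tightness of the volume coordinate together with the identification of its limit as $\mathcal M$ follow from the fact that $\mathcal M$ is the image of the jump measure of $\mathcal L$ under a fixed measurable map, via the standard truncation argument (prove convergence with jumps of size $<\varepsilon$ discarded, by a continuous-mapping theorem applied to the jump functional, then let $\varepsilon\to0$ using a uniform tail bound valid on both the discrete and the continuous side). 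The main obstacle is precisely this uniform control of the volume: one must show simultaneously that the huge number of small slots contributes nothing at scale $R^4$ uniformly in $x$, and — the delicate point — that, since $\xi$ is heavy-tailed (density $\sim x^{-5/2}$, so the increments of $\mathcal M$ have infinite second moment), no macroscopic slot is created, lost or split in the limit and tightness nonetheless survives. This is where the Skorokhod statement genuinely requires work beyond the one-dimensional Laplace transform already contained in Theorem~\ref{th:volhull}, and it is handled by the truncation-at-$\varepsilon$ scheme above.
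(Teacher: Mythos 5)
Your overall architecture is the paper's (and that of \cite{CLG}): take Krikun's convergence of the perimeter process as given, decompose the volume into the contributions attached to the macroscopic negative jumps of the perimeter plus a remainder, identify each macroscopic contribution as a $\xi_i(\Delta\mathcal L)^2$ term, and conclude by an $\varepsilon$-truncation. The only real difference of route is local: where you identify the jump contribution through the direct asymptotics of the volume of a single Boltzmann slot of perimeter $\asymp R^2$, the paper conditions on the two perimeters of the whole layer and extracts the limit law of its total volume from the exact generating function of Proposition \ref{prop:layervol} by Hankel-contour singularity analysis (Corollary \ref{cor:hulldiff}); both give the same limit $\tfrac{4}{3}\delta^2\xi$.

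Two steps of your remainder estimate are too quick, and one of them is where the real work lies. First, the dichotomy ``bounded perimeter'' versus ``perimeter of order $R^2$'' omits the intermediate regime: the slots (equivalently, the layers) whose perimeter jump lies between $O(1)$ and $\varepsilon R^2$ do not contribute $O(R^3)$ --- their total is of order $\varepsilon^{1/2}R^4$, which only vanishes after $\varepsilon\to 0$. Establishing the uniform-in-$R$ bound $R^{-4}\,\mathbb E\bigl[V_R^{\leq\varepsilon}\bigr]\leq C\varepsilon^{1/2}$ is the technical core of the paper's proof (estimates \eqref{eq:psmall}, \eqref{eq:Plarge} and \eqref{eq:pmedium}, the last obtained by singularity analysis in the regime $p\asymp\ell R^2$ with jump at most $\varepsilon R^2$); your closing sentence about a ``uniform tail bound'' points at this but does not supply it. Second, a second-moment estimate cannot give the uniformity in $x$: the number of inner vertices of a Boltzmann triangulation of the $p$-gon has tail of index $3/2$, hence infinite variance even for $p$ fixed. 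What works --- and what the paper uses --- is a first-moment bound combined with the monotonicity of $x\mapsto V_{\lfloor xR\rfloor}^{\leq\varepsilon}$, so that the supremum over $x\in[0,1]$ is attained at $x=1$ and Markov's inequality yields \eqref{eq:vrepsilon}. With these two repairs your argument coincides with the paper's.
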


As for Theorem \ref{th:volhull}, our proof is based on the skeleton decomposition of random triangulations and explicit computations of generating functions. The convergence of perimeters towards the process $\mathcal L$ was already established by Krikun \cite{Kr} using this decomposition and we prove the joint convergence of the second component.

\bigskip

Finally, we study the volume of geodesic slices of the UIPT, defined by analogy with geodesic slices of the Brownian map (see Miller and Sheffield \cite{MS}). Fix $r > 0$, and orient $\partial B_r^{\bullet} (\mathbf T_\infty)$ in such a way that the root edge of $\mathbf T_\infty$ lies on its right hand side. Now pick two vertices $v,v' \in \partial B_r^{\bullet} (\mathbf T_\infty)$, the geodesic slice $\mathbf S(r,v,v')$ is the submap of $B_r^{\bullet} (\mathbf T_\infty)$ bounded by the two leftmost geodesics (see Section \ref{sec:slices} for a precise definition) started respectively at $v$ and $v'$ to the root, and by the oriented arc from $v$ to $v'$ along $\partial B_r^{\bullet} (T_\infty)$ (See Figure \ref{fig:slice} for an illustration). Notice that $B_r^{\bullet} (\mathbf T_\infty) = \mathbf S(r,v,v') \cup \mathbf S(r,v',v)$. We will also denote by $v \wedge v'$ the vertex where the two leftmost geodesics started at $v$ and $v'$ coalesce.

For technical reasons, it will be easier to study the volume of geodesic slices minus the number of vertices on one of the two geodesics bounding it (for $S(r,v,v')$, we are talking about excluding a number of vertices between $2$ and $r + 1$). It is still possible to study the full volume of slices, but the formulas we provide will be much simpler and the number of vertices excluded is insignificant for large $r$ anyway.
\begin{theorem}
\label{th:slices}
Fix $n,r,q$ and $q_1, \ldots , q_n$ some non negative integers such that $q_1 + \cdots + q_n = q$. Conditionally on the event $\{|\partial B_r^{\bullet} (\mathbf T_\infty)| = q \}$, let $v_1$ be a vertex of $\partial B_r^{\bullet} (\mathbf T_\infty)$ chosen uniformly at random and let $v_2, \ldots , v_n$ be placed in that order on the oriented cycle $\partial B_r^{\bullet} (\mathbf T_\infty)$ such that the oriented arc from $v_j$ to $v_{j+1}$ along $\partial B_r^{\bullet} (\mathbf T_\infty)$ has length $q_j$ for every $j$ (we set $v_{n+1} = v_1$). Then, for $s_1, \ldots , s_n \in [0,1]$, one has
\begin{align*}
\mathbb{E} & \left[
\prod_{j=1}^n s_j^{|\mathbf S(r,v_j,v_{j+1})| - d(v_j,v_j \wedge v_{j+1}) -1}
\middle|  |\partial B_r^\bullet (\mathbf T_\infty) | = q
\right]\\
& \qquad \qquad \qquad =
\left(
\prod_{j=1}^n \left(t_j \, \frac{\varphi_{t_j}^{\{r\}}(0)}{\varphi^{\{r\}}(0)} \right)^{q_j}
\right) \times
\sum_{k=1}^n \frac{q_k}{q} \frac{1}{t_k} \, 
\frac{\varphi_{t_k}^{\{r\}'}(0)}{\varphi^{\{r\}'}(0)}
\frac{\varphi^{\{r\}}(0)}{\varphi_{t_k}^{\{r\}}(0)}
\end{align*}
where, for every $j \in \{ 1, \ldots ,n\}$, $t_j$ is the unique solution in $[0,1]$ of the equation $s_j^2 = t_j^2 (3-2t_j)$ and the functions $\varphi_t^{\{r\}}$ and $\varphi^{\{r\}}$ are computed explicitly in Lemma \ref{lem:ExpPhin}.
\end{theorem}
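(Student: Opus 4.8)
\emph{Proof idea.}\quad The plan is to read everything off Krikun's skeleton decomposition recalled in Section~\ref{sec:prel}; the computation itself is the one behind Proposition~\ref{prop:layervol}, refined so as to keep track of the slice structure.

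First I would fix the encoding. Conditionally on $\{|\partial B_r^\bullet(\mathbf T_\infty)| = q\}$, the skeleton decomposition represents $B_r^\bullet(\mathbf T_\infty)$ by a decorated forest: a branching structure consisting of $q$ cyclically arranged trees of height at most $r$, with Krikun's offspring distribution, each slot of which carries an independent Boltzmann triangulation with a boundary. The $q$ trees are in bijection with the $q$ edges of $\partial B_r^\bullet(\mathbf T_\infty)$ (the spine edges), and one of them, say the tree of the edge $e^\star$, plays a distinguished role: it roots the forest and is singled out by the root edge of $\mathbf T_\infty$, its innermost Boltzmann block being pointed at the origin --- this is what the derivative $\varphi^{\{r\}'}$ will record. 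Since $|B_r^\bullet(\mathbf T_\infty)|$ is additive over the forest (number of skeleton vertices plus numbers of inner vertices of the Boltzmann blocks), the generating function of the number of vertices of $\mathbf T_\infty$ carried by the material descending from one generic spine edge equals, after the substitution $s^2=t^2(3-2t)$, exactly $t\,\varphi_t^{\{r\}}(0)/\varphi^{\{r\}}(0)$, with $\varphi_t^{\{r\}}$ and $\varphi^{\{r\}}$ as in Lemma~\ref{lem:ExpPhin}; for the pointed tree of $e^\star$ the same count gives $\varphi_t^{\{r\}'}(0)/\varphi^{\{r\}'}(0)$ instead. These are the per-edge computations already underlying Proposition~\ref{prop:layervol}.

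The heart of the argument, and the step I expect to be the main obstacle, is to translate geodesics and slices into this encoding. Using the definition of a leftmost geodesic from Section~\ref{sec:slices} together with the triangular structure of the layers, I would show by induction on $r$ that the leftmost geodesic from a boundary vertex $v\in\partial B_r^\bullet(\mathbf T_\infty)$ to the origin follows, generation after generation, the leftmost ancestral line of the spine edge attached to $v$ in the skeleton forest, and in particular never enters the interior of a Boltzmann block. Two consequences follow: $d(v_j,v_j\wedge v_{j+1})$ is the number of generations before the ancestral lines of $v_j$ and $v_{j+1}$ coalesce, and the slice $\mathbf S(r,v_j,v_{j+1})$ is precisely the sub-map spanned by the trees and slots of the forest lying between these two ancestral lines (a planar ``wedge'' of the decorated forest). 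Hence $|\mathbf S(r,v_j,v_{j+1})|$ minus the $d(v_j,v_j\wedge v_{j+1})+1$ vertices of one of its two bounding geodesics is additive over the $q_j$ spine edges carried by the oriented arc from $v_j$ to $v_{j+1}$: the subtracted term is exactly the geodesic shared by consecutive slices, so that no vertex is ever double counted. The delicate points I would have to handle carefully are the orientation conventions, which side of a slice the Boltzmann blocks adjacent to a bounding geodesic belong to, and the degenerate cases where the two bounding geodesics share edges or where some $q_j=0$.

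Granting this dictionary, the conclusion is bookkeeping. Conditionally on $\{|\partial B_r^\bullet(\mathbf T_\infty)|=q\}$ and on the arc lengths $q_1,\dots,q_n$, the vector of corrected slice volumes is a function of the decorated forest which splits as a sum of contributions indexed by the $q$ spine edges, each edge belonging to exactly one arc; using the product structure of the layers conditionally on the perimeters $(|\partial B_i^\bullet(\mathbf T_\infty)|)_{i\le r}$, the conditional expectation in the statement therefore factorizes over the spine edges. Each spine edge lying in arc $j$ contributes the generic factor $t_j\,\varphi_{t_j}^{\{r\}}(0)/\varphi^{\{r\}}(0)$, which produces the product $\prod_{j=1}^n\bigl(t_j\,\varphi_{t_j}^{\{r\}}(0)/\varphi^{\{r\}}(0)\bigr)^{q_j}$, except for the distinguished edge $e^\star$, which contributes $\varphi_{t_k}^{\{r\}'}(0)/\varphi^{\{r\}'}(0)$ when it lies in arc $k$; dividing this pointed factor by the generic one yields the correction $\tfrac1{t_k}\tfrac{\varphi_{t_k}^{\{r\}'}(0)}{\varphi^{\{r\}'}(0)}\tfrac{\varphi^{\{r\}}(0)}{\varphi_{t_k}^{\{r\}}(0)}$. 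Finally, $v_1$ being chosen uniformly on $\partial B_r^\bullet(\mathbf T_\infty)$, the cyclic offset between $v_1$ and $e^\star$ is uniform, so $e^\star$ falls in arc $k$ with probability $q_k/q$; averaging over this choice gives the factor $\sum_{k=1}^n\frac{q_k}{q}\,\tfrac1{t_k}\tfrac{\varphi_{t_k}^{\{r\}'}(0)}{\varphi^{\{r\}'}(0)}\tfrac{\varphi^{\{r\}}(0)}{\varphi_{t_k}^{\{r\}}(0)}$ and hence the announced identity.
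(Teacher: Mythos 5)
Your proposal is correct and follows essentially the same route as the paper: the skeleton decomposition, the identification of each slice with the sub-forest of trees rooted on the corresponding boundary arc (with the $-d(v_j,v_j\wedge v_{j+1})-1$ correction accounting for the bounding geodesic vertices that the per-slot counts $s^{n_v+1}$ omit), the distinguished surviving tree producing the derivative factor, and the uniform choice of $v_1$ producing the weights $q_k/q$. The geometric dictionary you single out as the main obstacle (leftmost geodesics tracking ancestral lines of the forest) is exactly the fact the paper establishes, rather informally, in its Section~\ref{sec:slices}.
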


Equivalently, Theorem \ref{th:slices} states that, for each $k$, the root vertex of $\mathbf T_{\infty}$ belongs to the slice $\mathbf S(r,v_k,v_{k+1})$ with probability $q_k/q$ and that its volume has generating function
\[
\left(t_k \, \frac{\varphi_{t_k}^{\{r\}}(0)}{\varphi^{\{r\}}(0)} \right)^{q_k-1} \cdot \frac{\varphi_{t_k}^{\{r\}'}(0)}{\varphi^{\{r\}'}(0)},
\]
and that conditionally on this event, the volumes of the other slices are independent and have generating functions given by
\[
\left(t_j \, \frac{\varphi_{t_j}^{\{r\}}(0)}{\varphi^{\{r\}}(0)} \right)^{q_j}
\]
for every $j \neq k$. It is also worth noticing that the generating function of the volume of the slice containing the root vertex is exactly the same as the hull of $\mathbf T_{\infty}$ conditionally on the event $\{|\partial B_r^{\bullet} (\mathbf T_\infty)| = q_k \}$, suggesting that this slice has the same law as a hull once the two geodesic boundaries are glued.

\bigskip

Since geodesic slices do not form a growing family as the radius of the hulls grows, it is less natural to look for a scaling limit of their volume as a stochastic processes as in Theorem \ref{th:hullproc}. However, it is still quite straightforward to derive asymptotics from Theorem \ref{th:slices} and obtain
\begin{corollary}
\label{cor:slicelim}
Fix $n > 0$ an integer and $\ell,x >0$ some real numbers. Fix also $\ell_1, \ldots , \ell_n$ some non negative reals such that $\ell_1 + \cdots + \ell_n = \ell$. For every integer $R > 0$, conditionally on the event $\{|\partial B_{\lfloor x R \rfloor}^{\bullet} (\mathbf T_\infty)| = \lfloor \ell R^2 \rfloor \}$, let $v_1$ be a vertex of $\partial B_{\lfloor x R \rfloor}^{\bullet} (\mathbf T_\infty)$ chosen uniformly at random and let $v_2, \ldots , v_n$ be placed in that order on the oriented cycle $\partial B_{\lfloor x R \rfloor}^{\bullet} (\mathbf T_\infty)$ such that the oriented arc from $v_j$ to $v_{j+1}$ along $\partial B_{\lfloor x R \rfloor}^{\bullet} (\mathbf T_\infty)$ has length $\sim \ell_j R^2$ as $R \to \infty$. Then, for $\lambda_1, \ldots , \lambda_n > 0$, one has
\begin{align*}
\lim_{R\to \infty}
\mathbb{E} & \left[
\prod_{j=1}^n e^{-\lambda_j |\mathbf S(r,v_j,v_{j+1})| / R^{4}}
\middle|  |\partial B_{\lfloor x R \rfloor}^\bullet (\mathbf T_\infty) | = \lfloor \ell R^2 \rfloor
\right] \\
& \quad \quad
=
\left( \sum_{i=1}^n \frac{\ell_i}{\ell} x^3 (6 \lambda_i)^{3/4} \frac{\cosh \left((6 \lambda_i)^{1/4} x \right)}{\sinh^3 \left((6 \lambda_i)^{1/4} x \right)} \right)\\
& \quad \quad \quad \quad  \times 
\exp \left( -\sum_{i=1}^n  
 \ell_i \left( (6 \lambda)^{1/4} \left( \coth^2 \left( (6 \lambda)^{1/4} x \right) -\frac{2}{3}\right) - \frac{1}{x^2} \right) 
\right).
\end{align*}
\end{corollary}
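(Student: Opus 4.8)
The plan is to obtain Corollary~\ref{cor:slicelim} by a direct asymptotic analysis of the closed formula of Theorem~\ref{th:slices}, after the substitutions $r=\lfloor xR\rfloor$, $q=\lfloor\ell R^2\rfloor$, $q_j\sim\ell_jR^2$ and $s_j=e^{-\lambda_j/R^4}$, using the expansions already at hand from the proofs of Theorem~\ref{th:volhull} and Corollary~\ref{cor:hullcond}. Two elementary reductions come first. The relation $s_j^2=t_j^2(3-2t_j)$ with $s_j=e^{-\lambda_j/R^4}$ forces $t_j=1-\sqrt{2\lambda_j/3}\,R^{-2}+o(R^{-2})$, exactly as in the computation following Theorem~\ref{th:volhull}. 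Next, Theorem~\ref{th:slices} controls the generating function of $|\mathbf S(r,v_j,v_{j+1})|-d(v_j,v_j\wedge v_{j+1})-1$, whereas the corollary asks for that of $|\mathbf S(r,v_j,v_{j+1})|$; the two integrands differ by $\prod_j s_j^{\,d(v_j,v_j\wedge v_{j+1})+1}$, which is deterministically squeezed between $\prod_j e^{-\lambda_j(r+1)/R^4}$ and $1$ since $0\le d(v_j,v_j\wedge v_{j+1})\le r$. As $r=O(R)$ this factor tends to $1$, so a sandwich between that lower bound times the quantity given by Theorem~\ref{th:slices} and the quantity itself shows that the conditional expectation of $\prod_j e^{-\lambda_j|\mathbf S(r,v_j,v_{j+1})|/R^4}$ and the one of Theorem~\ref{th:slices} have the same limit, if either exists.

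The heart of the matter is a rearrangement of the right-hand side of Theorem~\ref{th:slices}. Write $A_R(\mu):=t\,\varphi_t^{\{r\}}(0)/\varphi^{\{r\}}(0)$ where $t\in[0,1]$ solves $e^{-2\mu/R^4}=t^2(3-2t)$, so that the product in Theorem~\ref{th:slices} is $\prod_{j=1}^nA_R(\lambda_j)^{q_j}$. Using the identity
\[
A_R(\mu)\cdot\Bigl(\tfrac1t\,\frac{\varphi_t^{\{r\}'}(0)}{\varphi^{\{r\}'}(0)}\,\frac{\varphi^{\{r\}}(0)}{\varphi_t^{\{r\}}(0)}\Bigr)=\frac{\varphi_t^{\{r\}'}(0)}{\varphi^{\{r\}'}(0)},
\]
the summand indexed by $k$ on the right-hand side of Theorem~\ref{th:slices} (the product times the $k$-th term of the sum) becomes
\[
\frac{q_k}{q}\;\Bigl(\prod_{j\neq k}A_R(\lambda_j)^{q_j}\Bigr)\;A_R(\lambda_k)^{q_k-1}\,\frac{\varphi_{t_k}^{\{r\}'}(0)}{\varphi^{\{r\}'}(0)}.
\]
This is precisely the probabilistic decomposition recorded in the remark after Theorem~\ref{th:slices}: $q_k/q$ is the probability that the root vertex lies in slice $k$; the factor $A_R(\lambda_k)^{q_k-1}\varphi_{t_k}^{\{r\}'}(0)/\varphi^{\{r\}'}(0)$ is the generating function of the volume of that slice, which by the same remark coincides with $\mathbb E\bigl[e^{-\lambda_k|B_r^\bullet(\mathbf T_\infty)|/R^4}\bigm| |\partial B_r^\bullet(\mathbf T_\infty)|=q_k\bigr]$; and each $A_R(\lambda_j)^{q_j}$ with $j\ne k$ is the generating function of the $j$-th slice.

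It remains to let $R\to\infty$ in the finitely many terms. Since $q_k\sim\ell_kR^2$ and $r=\lfloor xR\rfloor$, Corollary~\ref{cor:hullcond} gives
\[
A_R(\lambda_k)^{q_k-1}\,\frac{\varphi_{t_k}^{\{r\}'}(0)}{\varphi^{\{r\}'}(0)}\;\xrightarrow[R\to\infty]{}\;x^3(6\lambda_k)^{3/4}\,\frac{\cosh\bigl((6\lambda_k)^{1/4}x\bigr)}{\sinh^3\bigl((6\lambda_k)^{1/4}x\bigr)}\;G(\lambda_k)^{\ell_k},
\]
where $G(\mu):=\exp\bigl(-\bigl((6\mu)^{1/4}(\coth^2((6\mu)^{1/4}x)-\tfrac23)-x^{-2}\bigr)\bigr)$; the perimeter-to-volume building block appearing in the same proof also yields $A_R(\lambda_j)^{q_j}\to G(\lambda_j)^{\ell_j}$ for every $j$ (replacing the exponent $\lfloor\ell_jR^2\rfloor$ there by $q_j=\ell_jR^2+o(R^2)$ changes nothing since $\log A_R(\lambda_j)=O(R^{-2})$). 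Combining these with $q_k/q\to\ell_k/\ell$, the $k$-th term converges to $\tfrac{\ell_k}{\ell}\,x^3(6\lambda_k)^{3/4}\cosh((6\lambda_k)^{1/4}x)\sinh^{-3}((6\lambda_k)^{1/4}x)\cdot\prod_{j=1}^nG(\lambda_j)^{\ell_j}$, and summing over $k$ gives the announced limit, since $\prod_{j=1}^nG(\lambda_j)^{\ell_j}=\exp\bigl(-\sum_{j=1}^n\ell_j((6\lambda_j)^{1/4}(\coth^2((6\lambda_j)^{1/4}x)-\tfrac23)-x^{-2})\bigr)$ (the unsubscripted $\lambda$ in the exponential of the statement being understood as $\lambda_i$).

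I do not expect a real obstacle: all the genuinely delicate analysis — the explicit form of $\varphi_t^{\{r\}}$ from Lemma~\ref{lem:ExpPhin} and the fine expansions of $\varphi_t^{\{r\}}(0)$, $\varphi^{\{r\}}(0)$ and their derivatives at the scale $r\asymp R$, $t=1-O(R^{-2})$ — is already carried out for Theorem~\ref{th:volhull} and Corollary~\ref{cor:hullcond}. The two points to handle with some care are the elementary reductions of the first paragraph (the expansion of $t_j$ and the sandwich removing the $-d(v_j,v_j\wedge v_{j+1})-1$ correction), and the bookkeeping observation that the finite sum over $k$ decouples into a root-location probability $q_k/q\to\ell_k/\ell$, one hull-type factor for the slice containing the root, and independent pure-exponential factors $G(\lambda_j)^{\ell_j}$ for the remaining slices.
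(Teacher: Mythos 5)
Your proposal is correct and follows the same route as the paper, whose proof simply states that the corollary is a direct consequence of Theorem \ref{th:slices} combined with the asymptotics established in the proof of Corollary \ref{cor:hullcond}. You in fact supply details the paper leaves implicit --- the sandwich removing the $-d(v_j,v_j\wedge v_{j+1})-1$ correction and the rearrangement identifying each summand with a root-location probability times a conditioned hull generating function times pure exponential factors --- and both steps are sound.
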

As for Corolloray \ref{cor:hullcond}, this can be interpreted in terms of the Brownian plane: each slice has probability $\ell_i/\ell$ to contain the root, in which case its volume has the same law as the volume of the hull of the Brownian plane condionally on the perimeter being $\ell_i$. In addition, conditionally on this event, the volume of the other slices are independent and their Laplace transform is given by
\[
\exp \left( -  
 \ell_j \left( (6 \lambda)^{1/4} \left( \coth^2 \left( (6 \lambda)^{1/4} x \right) -\frac{2}{3}\right) - \frac{1}{x^2} \right) 
\right)
\]
for every $j \neq i$.

\bigskip

The paper is organized as follows. In Section \ref{sec:prel} we recall some results about the generating functions of triangulations counted by boundary length and inner vertices and we describe the decomposition of the UIPT into layers. In Section \ref{sec:hullvol} we present our method and use it to prove Theorem \ref{th:volhull} and Corollary \ref{cor:hullcond}. Section \ref{sec:hullproc} studies the difference of volume between hulls and contains the proof of Theorem \ref{th:hullproc}. Finally, Section \ref{sec:slices} studies geodesic slices and contains the proofs of Theorem \ref{th:slices} and Corollary \ref{cor:slicelim}.

\bigskip

\begin{ack}
The author would like to thank Julien Bureaux for pointing out the link with  hyperbolic functions in Lemma \ref{lem:ExpPhin}, yielding a simpler proof and a much nicer formula. The author acknowledges support form the ANR grant "GRaphes et Arbres AL\'eatoires" (ANR-14-CE25-0014) and from CNRS.
\end{ack}

\section{Preliminaries}
\label{sec:prel}

\subsection{Generating Series}

As already mentioned in the introduction, the triangulations we consider in this work are type I triangulations in the terminology of Angel and Schramm \cite{AS} -- loops and multiple edges are allowed -- and will always be rooted even when not mentioned explicitely. More precisely, we deal with triangulations with simple boundary, that is rooted planar maps (the root of a map is a distinguished oriented edge and the root vertex of a rooted map is the origin of its root edge) such that every face is a triangle except for the face incident to the right hand side of the root edge which can be any simple polygon. If the length of the boundary face is $p$, we will speak of triangulations of the $p$-gon.

One of the advantages of dealing with type I triangulations for our purpose is that triangulations of the sphere can be thought of as triangulations of the $1$-gon as already mentioned in \cite{CLGfpp}. To see that, split the root edge of any triangulation into a double edge and then add a loop inside the region bounded by the new double edge and re-root the triangulation at this loop oriented clockwise (so that the interior of the loop lies on its right hand side). Note that this construction also works if the root is itself a loop. This tranformation is a bijection between triangulations of the sphere and triangulations of the $1$-gon.

The enumeration of triangulations of the $p-$gon is now well known and can be found for example in \cite{CLGfpp,Kr}. Let $\mathcal{T}_{n,p}$ be the set of triangulations of the $p-$gon with $n$ inner vertices (\emph{i.e.} vertices that do not belong to the boundary face) and define the bivariate generating series
\begin{equation}
\label{eq:Tseries}
T(x,y) = \sum_{p \geq 1} \sum_{n \geq 0} \left| \mathcal{T}_{n,p} \right| x^n y^{p-1}.
\end{equation}
Tutte's equation reads, for $y > 0$,
\begin{equation}
\label{eq:Tutte}
T(x,y) = y + x \cdot \frac{T(x,y) - T(x,0)}{y} + T(x,y)^2.
\end{equation}
This equation can be solved using the quadratic method and the solution is explicit in terms of the unique solution of the equation
\begin{equation}
\label{eq:h}
x^2 = h(x)^2 (1-8h(x))
\end{equation}
such that $h(0) = 0$. This function $h$ seen a Taylor series has non negative coefficients and its radius of convergence is
\begin{equation}
\label{eq:rho}
\rho := \frac{1}{12 \sqrt{3}}.
\end{equation}
In addition, it is finite at its radius of convergence and
\begin{equation}
\label{eq:alpha}
\alpha := h(\rho) = \frac{1}{12}.
\end{equation}
The solution of equation \eqref{eq:Tutte} is then well defined on  $[0,\rho] \times [0,\alpha]$ and given by
\begin{equation}
\label{eq:expT0h}
T(x,0) = \frac{6h^2 + x - h}{2x}
\end{equation}
for $x \in [0,\rho]$ and
\begin{align}
\label{eq:expTxy}
T(x,y) & = \frac{y-x}{2y} + \frac{\sqrt{(y-x)^2 - 4 y^3 + 4 x y T(x,0)}}{2y} \notag\\
& = \frac{y-x}{2y} + \frac{\sqrt{x^2+y^2-4y^3+12yh^2-2yh}}{2y}
\end{align}
for $(x,y) \in [0,\rho] \times [0,\alpha]$. These expressions are compatible when taking the limit $x \to 0$ and/or $y \to 0$. Notice also that $T(\rho,\alpha)$ is finite:
\[
T(\rho,\alpha) = \frac{3-\sqrt{3}}{6}.
\]

The formulas \eqref{eq:expT0h} and \eqref{eq:expTxy} allow to compute explicitely the number of triangulations of the $p$-gon with a given number of inner vertices. However we will not need the exact formulas, only the following asymptotic expression:
\[
\left| \mathcal{T}_{n,p} \right| \underset{n \to \infty}{\sim} C(p) \rho^{-n} n ^{-5/2}
\]
for every $p \geq 1$ with
\[
C(p) = \frac{3^{p-2} p (2p)!}{4 \sqrt{2 \pi} (p!)^2}.
\]
In the following we will always denote by $T_p(x) = [y^{p-1}]T(x,y)$ the generating series of triangulations with boundary length $p$ counted by inner vertices.

\subsection{Skeleton decomposition of finite triangulations}
\label{sec:skel}

We present here the skeleton decomposition of triangulations as first defined by Krikun \cite{Kr} for type II triangulations and later by Curien and Le Gall \cite{CLGfpp} for type I triangulations. First, we need to define balls and hulls for finite triangulations.

Let $T$ be a triangulation of the sphere seen as a triangulation of the $1$-gon. For every integer $r > 0$, the ball $B_r(T)$ of radius $r$ centered at the root vertex of $T$ is the planar map obtained by taking the union of the faces of $T$ that have at least one vertex at distance less than or equal to $r-1$ from the root vertex of $T$. Now let $v$ be a distinguished vertex of $T$ and fix $r>0$ such that the distance between $v$ and the root vertex of $T$ is strictly larger than $r$. In that case, the vertex $v$ belongs to the complement of the ball $B_r(T)$ and we define the $r-$hull $B^{\bullet} _r(T,v)$ of the pointed map $(T,v)$ as the union of $B_r(T)$ and all the connected components of the complement in $T$ of $B_r(T)$ except the one that contains $v$. 

Define the boundary $\partial B^{\bullet}_r(T,v)$ of $B^{\bullet}_r(T,v)$ as the set of vertices of $B^{\bullet}_r(T,v)$ having at least one neighbour in the complement of $B^{\bullet}_r(T,v)$, with the edges joining any pair of such vertices. An important observation is that $\partial B^{\bullet}_r(T,v)$ is a simple cycle of $T$ and that its vertices are all at distance exactly $r$ from the root vertex of $T$.
The planar map $B^{\bullet}_r(T,v)$ is therefore almost a triangulation with a simple boundary, the difference being that it is rooted at the orginal root edge of $T$ instead of an edge of the boundary face. It is a special case of a triangulation of the cylinder defined in \cite{CLGfpp}:
\begin{definition}
Let $r \geq 1$ be an integer. A \emph{triangulation of the cylinder of height $r$} is a rooted planar map such that all faces are triangles except for two distinguished faces verifying:
\begin{enumerate}
\item The boundaries of the two distiguished faces form two disjoint simple cycles.
\item The boundary of one of the two distinguished faces contains the root edge, and this face is on the right hand side of the root edge. We call this face the root face and the other distiguished face the exterior face.
\item Every vertex of the exterior face is at graph distance exactly $r$ from the boundary of the root face, and edges of the boundary of the exterior face also belong to a triangle whose third vertex is at distance $r-1$ from the root face.
\end{enumerate}
For every intergers $r,p,q \geq 1$, a \emph{triangulation of the $(r,p,q)$-cylinder} is a triangulation of the cylinder of height $r$ such that its root face has degree $p$ and its exterior face has degree $q$.
\end{definition}
With that terminology, the planar maps $\Delta$ such that $\Delta = B^{\bullet}_r(T,v)$ for some integer $r$ and some pointed triangulation of the sphere $(T,v)$ are the triangulations of the $(r,1,q)-$cylinder for some integer $q \geq 1$. Triangulations of the cylinder will also allow us to describe the geometry of triangulations between hulls. More precisely, if $(T,v)$ is a pointed triangulation of the sphere and $r_2 > r_1 >0$ are two integers such that $v$ is at distance strictly larger than $r_2$ from the root vertex of $T$, we define the layer between heights $r_1$ and $r_2$ of $(T,v)$ by
\[
L_{r_1,r_2}^\bullet (T,v) = \left(B^{\bullet}_{r_2}(T,v)\setminus B^{\bullet}_{r_1}(T,v) \right) \cup \partial B^{\bullet}_{r_1}(T,v).
\]
The planar maps $\Delta$ such that $\Delta = L_{r_1,r_2}^\bullet (T,v)$ for some integers $r_2 > r_1 >0$ and some pointed triangulation of the sphere $(T,v)$ are the triangulations of the $(r,p,q)-$cylinder for some integers $p,q \geq 1$ (we will see in a moment how to canonically root the layers of a triangulation).

\bigskip

Fix $r,p,q>0$ and $\Delta$ a triangulation of the $(r,p,q)-$cylinder. The \emph{skeleton decomposition} of $\Delta$ consists of an ordered forest of $q$ rooted plane trees with maximal height $r$ and a collection of triangulations with a boundary indexed by the vertices of the forest of height stricly less than $r$.

Borrowing from Krikun \cite{Kr} and Curien and Le Gall \cite{CLGfpp}, we define the growing sequence of hulls of $\Delta$ as follows: for $1 \leq j \leq r - 1$, the ball $B_j(\Delta)$ is the union of all faces of $\Delta$ having a vertex at distance stricly smaller than $j$ from the root face, and the hull $B_j^\bullet (\Delta)$ consists of $B_j(\Delta)$ and all the connected components of its complement in $\Delta$ except the one containing the exterior face. By convention $B_r^\bullet(\Delta) = \Delta$. For every $j$, the hull $B_j^\bullet (\Delta)$ is a triangulation of the $(j,p,q')$-cylindler for some non negative integer $q'$, and we denote its exterior boundary by $\partial_j \Delta$. By convention $\partial_0 \Delta$ is the boundary of the root face of $\Delta$. In addition, every cycle $\partial_j \Delta$ is oriented so that $B_j^\bullet (\Delta)$ is always on the right hand side of $\partial_j \Delta$.

Now let $\mathcal{N}(\Delta)$ be the collection of all edges of $\Delta$ that belong to one of the cycles $\partial_i \Delta$ for some $0 \leq i \leq r$. This set is a discrete version of the metric net of the Brownian map introduced by Miller and Sheffield \cite{MS}. In order to define a genealogy on $\mathcal{N}(\Delta)$, notice that, for $1 \leq i \leq r$, every edge of $\partial_i \Delta$ belongs to exactly one face of $\Delta$ whose third vertex belongs to $\partial_{i-1} \Delta$ (it is the face on its right hand side). Such faces are usually called down triangles of height $i$. Now, for any $1 \leq i \leq r$, we say that an edge $e \in \partial_i \Delta$ is the parent of an edge $e' \in \partial_{i-1} \Delta$ if the first vertex belonging to a down triangle of height $i$ encountered when turning around the oriented cycle $\partial_{i-1} \Delta$ and starting at the end vertex of the oriented edge $e'$ belongs to the down triangle associated to $e$. See Figure \ref{fig:genealogy} for an illustration.

These relations define a forest $F$ of $q$ rooted trees, its vertices being in one-to-one correspondence with the edges of $\mathcal{N} (\Delta)$, that inherit from the planar structure of $\Delta$, making them planar rooted trees. In addition we can order canonicaly the trees, starting from the one containing the root edge of $\Delta$ and following the orientation of $\partial_r \Delta$. Notice also that every tree of the forest has height smaller than or equal to $r$ and that the whole forest has exactly $p$ vertices at height $r$.

\begin{figure}[ht!]
\begin{center}
\includegraphics[width=0.8\textwidth]{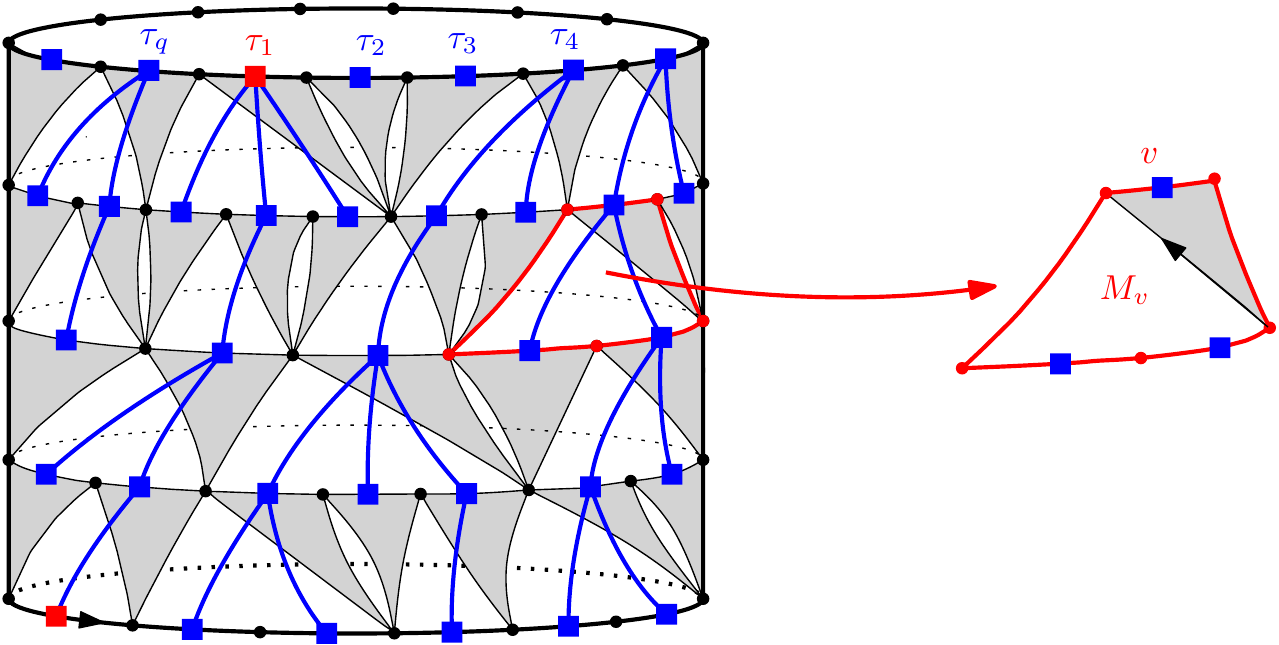}
\caption{\label{fig:genealogy} Skeleton decomposition of a triangulation of the cylinder. The distinguished vertex corresponding to the root edge of the triangulation is the red one on the bottom left. Left: construction of the forest. Right: triangulation with a boundary filling a slot.}
\end{center}
\end{figure}

To completely describe $\Delta$, in addition to the forest $F$ that gives the full structure of $\mathcal{N} (\Delta)$ and the associated down triangles, we need to specify the structure of the submaps of $\Delta$ lying in the interstices, or slots, bounded by its down triangles. More precisely, to each edge $e \in \partial_i \Delta$ where $1\leq i \leq r$, we associate a slot bounded by its children and the two edges joining the starting vertex of $e$ to $\partial_{i-1} \Delta$ (if $e$ has no child, these two edges may or may not be glued into a single edge). This slot is rooted at its unique boundary edge belonging to the down triangle associated to $e$, the orientation chosen so that the interior of the slot is on the left hand side of the root. See Figure \ref{fig:genealogy} for an illustration. With these conventions, the slot associated to an edge $e$ is filled with a well defined triangulation of the $(c_e + 2)$-gon, where $c_e$ is the number of children of $e$ in the forest $F$. The triangulation of the $(r,p,q)$-cylinder $\Delta$ is then fully characterized by the forest $F$ and the collection of triangulations with a boundary associated to the vertices of $F$ of height stricly less than $r$.

\bigskip

To summerize, let us say that a pointed forest is \emph{$(r,p,q)$-admissible} if
\begin{enumerate}
\item It is composed of an ordered sequence of $q$ rooted plane trees of height lesser than or equal to $r$.
\item It has exaclty $p$ vertices at height $r$,
\item the distinguished vertex has height $r$ and belongs to the first tree.
\end{enumerate}
We denote by $\mathcal F(r,p,q)$ the set of all $(r,p,q)$-admissible forests, and for any $F \in \mathcal{F}(r,p,q)$ we denote by $F^\star$ for the set of vertices of $F$ at height stricly smaller than $r$.

The skeleton decomposition presented above is a bijection between triangulations of the $(r,p,q)$-cylinders and pairs consisting of a $(r,p,q)$-admissible forest $F$ and a collection $(M_v)_{v \in F^\star}$, where, for each $v \in F^\star$ and denoting by $c_v$ the number of children of $v$ in $F$, $M_v$ is a triangulation of the $(c_v +2)$-gon. We say that the forest associated to a triangulation of a cylinder $\Delta$ is its skeleton and denote it by $\mathrm{Skel}(\Delta)$.

As metioned earlier, this decomposition allows to canonically root the layers of a triangulation by rooting each layer at the ancestor of the root edge of the triangulation in its skeleton.

\subsection{The UIPT and its skeleton decomposition}

Thanks to the spatial Markov property (see  \cite{AS}, Theorem 5.1), the skeleton decomposition is particularly well suited to study the UIPT. Indeed, for any intergers $r,q \geq 1$ and any $(r,1,q)$-admissible forest $F$, this property states that conditionally on the event $\{\mathrm{Skel} (B_{r}^\bullet ( \mathbf{T}_{\infty})) = F \}$, the triangulations filling the slots associated to the down triangles constitute a family of independent Boltzmann triangulations $\left(\mathbf T^{(c_v +2)}\right)_{v \in F^\star}$ where, for any integer $p \geq 1$, the law of the Boltzmann triangulation of the $p-$gon is given by
\[
\mathbb{P} \left(\mathbf T^{(p)} = T \right) = \frac{\rho^{n}}{T_p(\rho)}
\]
for any triangulation of the $p-$gon $T$ with $n$ inner vertices.

From this, a lot of information on the skeleton decomposition of the UIPT can be dug, such as the following Lemma that will be instrumental for our purpose.

\begin{lemma}[\cite{CLGfpp,Kr}]
\label{lem:lawhull}
Fix $r,p,q > 0$ and let $\Delta$ be a triangulation of the $(r,p,q)$-cylinder. The skeleton of $\Delta$ is a $(r,p,q)-$admissible forest $F \in \mathcal{F}(r,p,q)$. For each $v \in F^\star$, we denote the triangulation filling the slot associated to $v$ by $M_v$ and the number of its inner vertices by $n_v$. Then, for any $r' \geq 0$,
\begin{align*}
\mathbb{P} \left( L^{\bullet}_{r',r'+r}( \mathbf{T}_{\infty}) = \Delta 
\middle| |\partial B_{r'}^\bullet ( \mathbf{T}_{\infty})| = p
\right)
&= \frac{\alpha^q C(q)}{\alpha^p C(p)} \prod_{v \in F^\star} \alpha^{c(v) -1} \rho^{n_v + 1},\\
&= \frac{\alpha^q C(q)}{\alpha^p C(p)} \prod_{v \in F^\star} \theta(c_v) \prod_{v \in F^\star} \frac{\rho^{n_v}}{T_{c_v +2} (\rho)},
\end{align*}
where $\theta$ is the critical offspring distribution whose generating function $\varphi$ is given by
\[
\varphi (u) = \sum_{i=0}^{\infty} \theta(i) u^i 
= \frac{\rho}{\alpha^2 u} \left( T(\rho, \alpha u) - T(\rho,0) \right) 
= 1-\left( 1 + \frac{1}{\sqrt{1-u}}\right)^{-2}, \quad u \in [0,1].
\]
\end{lemma}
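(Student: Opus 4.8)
The plan is to combine the skeleton bijection of Section~\ref{sec:skel} with the spatial Markov property of the UIPT, then identify the resulting offspring distribution explicitly.

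First I would recall the skeleton bijection: a triangulation of the $(r,p,q)$-cylinder is encoded by an $(r,p,q)$-admissible forest $F$ together with a collection $(M_v)_{v\in F^\star}$ where $M_v$ is a triangulation of the $(c_v+2)$-gon. By the spatial Markov property of the UIPT (\cite{AS}, Theorem~5.1), conditionally on $\{\mathrm{Skel}(B_{r'+r}^\bullet(\mathbf T_\infty))=F'\}$ for an $(r'+r,1,q)$-admissible $F'$, the triangulations filling the slots are independent Boltzmann triangulations, the one in a slot of perimeter $p$ having law $\rho^{n}/T_p(\rho)$. Since the layer $L^\bullet_{r',r'+r}(\mathbf T_\infty)$ is obtained by looking at the portion of the skeleton between heights $r'$ and $r'+r$ together with its slots, conditioning further on $|\partial B_{r'}^\bullet(\mathbf T_\infty)|=p$ amounts to fixing the number of vertices at the bottom of this sub-forest. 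I would therefore write $\mathbb P(L^\bullet_{r',r'+r}(\mathbf T_\infty)=\Delta,\ |\partial B^\bullet_{r'}|=p)$ as a sum over all completions of $\Delta$ into a full hull of some height, then divide by $\mathbb P(|\partial B^\bullet_{r'}|=p)$. The key point is that the portions above height $r'+r$ and the combinatorial factors there factor out and cancel, leaving a clean expression: the probability of the skeleton-forest $F$ times the Boltzmann weights $\prod_{v\in F^\star}\rho^{n_v}/T_{c_v+2}(\rho)$ of the slots, with an overall ratio involving the law of the perimeters at heights $r'$ and $r'+r$.

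Next I would compute the law of the perimeter process. Krikun's computation (see \cite{Kr,CLGfpp}) gives $\mathbb P(|\partial B^\bullet_{r'}(\mathbf T_\infty)|=p)$ proportional to $\alpha^p C(p)$ up to a normalization independent of $p$ (this is where the asymptotics $|\mathcal T_{n,p}|\sim C(p)\rho^{-n}n^{-5/2}$ enters, via a local-limit/ratio argument from the finite triangulation counts). Substituting, the prefactor becomes $\dfrac{\alpha^q C(q)}{\alpha^p C(p)}$, which is the first displayed factor. For the product over $F^\star$, I would count the number of ways the down triangles and slot perimeters arise: each vertex $v$ of height $<r$ with $c_v$ children contributes a slot of perimeter $c_v+2$ filled by a Boltzmann triangulation with $n_v$ inner vertices, contributing weight $\rho^{n_v}/T_{c_v+2}(\rho)$, and in addition the enumeration of cylinders in terms of forests produces a combinatorial weight per vertex. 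Matching with the finite-triangulation generating series identity $T(\rho,\alpha u)-T(\rho,0)$ expanded in $u$, one identifies $\alpha^{c_v-1}\rho\cdot\big(\rho^{n_v}/T_{c_v+2}(\rho)\big)^{-1}\cdot\rho^{n_v}/T_{c_v+2}(\rho)$... more precisely, the per-vertex weight is $\alpha^{c_v-1}\rho^{n_v+1}$, which I then rewrite as $\theta(c_v)\cdot\rho^{n_v}/T_{c_v+2}(\rho)$ where $\theta(c)=\alpha^{c-1}\rho\, T_{c+2}(\rho)$. This is the definition of $\theta$, and gives the second displayed line.

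Finally I would verify that $\theta$ is a probability distribution with the stated generating function. Using $T_{c+2}(\rho)=[y^{c+1}]T(\rho,y)$ and $y=\alpha u$, we get $\sum_{c\geq 0}\theta(c)u^c=\sum_{c\geq 0}\alpha^{c-1}\rho\,[y^{c+1}]T(\rho,y)\,u^c=\dfrac{\rho}{\alpha^2 u}\big(T(\rho,\alpha u)-T(\rho,0)\big)$, matching the middle expression for $\varphi$. For the closed form, I would plug the explicit formula \eqref{eq:expTxy} for $T(\rho,y)$ with $x=\rho$, use $\rho^2=\alpha^2(1-8\alpha)$ from \eqref{eq:h} (equivalently the numerical values $\rho=1/(12\sqrt3)$, $\alpha=1/12$), and simplify the square root; a short algebraic manipulation turns $\dfrac{\rho}{\alpha^2 u}(T(\rho,\alpha u)-T(\rho,0))$ into $1-(1+(1-u)^{-1/2})^{-2}$. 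Criticality ($\varphi'(1)=1$) then follows by differentiating this closed form (the derivative blows up at $u=1$ in a way that still gives mean $1$ — more carefully, $\varphi(1)=1$ and one checks $\sum i\,\theta(i)=1$ from the expansion). The main obstacle is the bookkeeping in the first step: carefully justifying, via the spatial Markov property, that conditioning on the bottom perimeter $p$ and truncating the skeleton at height $r'+r$ yields exactly the product form with the ratio $\alpha^qC(q)/(\alpha^pC(p))$ and no leftover cross terms; the generating-function identifications afterwards are routine given \eqref{eq:expTxy}.
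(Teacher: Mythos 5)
A preliminary remark: the paper does not actually prove this lemma — it is quoted from \cite{Kr,CLGfpp} with the comment that it ``is not hard to establish'' — so I am comparing your sketch with the standard argument in those references. Your overall architecture (skeleton bijection plus spatial Markov property, then identification of $\theta(c)=\rho\,\alpha^{c-1}T_{c+2}(\rho)$ and of $\varphi(u)=\frac{\rho}{\alpha^2u}\bigl(T(\rho,\alpha u)-T(\rho,0)\bigr)$ from the series \eqref{eq:Tseries} and the explicit formula \eqref{eq:expTxy}) is the right one, and the second half of your sketch is essentially correct.

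There is, however, a genuine gap in how you produce the prefactor $\alpha^qC(q)/(\alpha^pC(p))$. You claim that $\mathbb P\bigl(|\partial B^\bullet_{r'}(\mathbf T_\infty)|=p\bigr)$ is proportional to $\alpha^pC(p)$ with a normalization independent of $p$. This is false: $\alpha^pC(p)\sim c\sqrt{p}$ is not even summable in $p$, and the true law \eqref{eq:lawPerim} carries the additional $p$-dependent factor $\bigl(1-(r'+1)^{-2}\bigr)^{p-1}(r'+1)^{-3}$. Moreover \eqref{eq:lawPerim} is itself \emph{derived from} the present lemma, so invoking the perimeter law at this stage would be circular. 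The prefactor does not come from a ratio of perimeter laws but from the local-limit ratio of the counts of triangulations filling the \emph{exterior} of the two boundary cycles: by the spatial Markov property, conditionally on $|\partial B^\bullet_{r'}(\mathbf T_\infty)|=p$ the complement of the hull is a UIPT of the $p$-gon, and for that object the probability of seeing the fixed layer $\Delta$ (outer perimeter $q$, with $|\Delta|-p$ vertices off the inner boundary) equals $\lim_{N}|\mathcal T_{N-(|\Delta|-p),q}|/|\mathcal T_{N,p}|=\frac{C(q)}{C(p)}\,\rho^{|\Delta|-p}$, using $|\mathcal T_{n,p}|\sim C(p)\rho^{-n}n^{-5/2}$. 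Since an $(r,p,q)$-admissible forest satisfies $\sum_{v\in F^\star}(c_v-1)=p-q$ and since $\sum_{v\in F^\star}(n_v+1)=|\Delta|-p$, the quantity $\frac{C(q)}{C(p)}\rho^{|\Delta|-p}$ is exactly $\frac{\alpha^qC(q)}{\alpha^pC(p)}\prod_{v\in F^\star}\alpha^{c_v-1}\rho^{n_v+1}$, i.e.\ the first displayed line; your identification of $\theta$ and $\varphi$ then finishes the proof. (A minor point: $\varphi'$ does not blow up at $u=1$; from the closed form $\varphi(u)=u+2(1-u)^{3/2}+O\bigl((1-u)^2\bigr)$, so $\varphi'(1)=1$ and criticality is immediate.)
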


Lemma \ref{lem:lawhull} is not hard to establish (see \cite{CLGfpp,Kr} for the proof), and its main interest is that it allows do do exact computations by interpreting the product over vertices of the forest as the probability of some events for a branching process associated to $\varphi$. As we will do similar computations in various situations, let us give an example taken from \cite{Kr} for the sake of clarity, and because it will be needed later. Say we want to compute $\mathbb{P} \left( \left| \partial B^{\bullet}_r( \mathbf{T}_{\infty}) \right| = q \right)$ for some $q>0$. Since $\partial B_{0}^\bullet ( \mathbf{T}_{\infty})$ is the root edge of $\mathbf T _{\infty}$, which we recall is a loop, it has length $1$ and the formula of Lemma \ref{lem:lawhull} directly gives:
\[
\mathbb{P} \left( \left| \partial B^{\bullet}_r( \mathbf{T}_{\infty}) \right| = q \right) = \frac{\alpha^q C(q)}{\alpha C(1)} \sum_{F \in \mathcal{F}(r,1,q)} \prod_{v \in F^\star} \theta(c_v) = \frac{\alpha^q C(q)}{\alpha C(1)} \frac{1}{q} \sum_{F \in \mathcal{F}'(r,1,q)} \, \prod_{v \in F^\star} \theta(c_v),
\]
where $\mathcal{F}'(r,1,q)$ is the set of all ordered forests of rooted plane trees with height lesser than or equal to $r$, the whole forest having a single vertex at height $r$. Thus $\mathcal{F}'(r,1,q)$ is just the set of forests in $\mathcal{F}(r,1,q)$ up to a circular permutation, explaining the factor $1/q$. But now the quantity
\[
\sum_{F \in \mathcal{F}'(r,1,q)} \, \prod_{v \in F^\star} \theta(c_v)
\]
is exactly the probability that a Galton-Waton branching process with offspring distribution given by $\varphi$ started with $q$ particles has a single particle at generation $r$. Therefore, we have
\[
\mathbb{P} \left( \left| \partial B^{\bullet}_r( \mathbf{T}_{\infty}) \right| = q \right) = \frac{\alpha^q C(q)}{\alpha C(1)}
\frac{1}{q} \, [u](\varphi^{\{r\}}(u))^q
\]
where $\varphi^{\{ r\}} (u) = \underbrace{\varphi \circ \cdots \circ \varphi}_{r \text{ times}}(u)$ and $[u]f(u)$ is the coeffiscient in $u$ of the Taylor series at $0$ of the function $f$. The iterates $\varphi^{\{r\}}$ can be computed explicitely (see Lemma \ref{lem:ExpPhin} with $t=1$), giving the following exact formula that will be used later in the paper
\begin{equation}
\label{eq:lawPerim}
\mathbb{P} \left( \left| \partial B^{\bullet}_r( \mathbf{T}_{\infty}) \right| = q \right) = \frac{\alpha^q C(q)}{\alpha C(1)} \left(1 - \frac{1}{(r+1)^2} \right)^{q-1} \frac{1}{(r+1)^3}.
\end{equation}

\section{Hull volume}
\label{sec:hullvol}

\subsection{A branching process}

In this section, we will focus on the generating function of the number of vertices in the hulls of the UIPT. To that aim, we start with the following result:
\begin{lemma}
\label{Lem:hullvolgen}
For any integer $r > 0$, $s \in [0,1]$ and $t \in [0,1]$, one has
\[
\mathbb{E} \left[ s^{|B^{\bullet}_r(\mathbf{T}_{\infty})| } \right] =
s \sum_{q \geq 1}
\frac{(\alpha t)^q C(q)}{\alpha t C(1)}
\sum_{F \in \mathcal{F}(r,1,q)} \quad \prod_{v \in F^{\star}} \rho s \cdot (\alpha t)^{c(v) -1} \cdot T_{c(v) +2} (\rho s).
\]
\end{lemma}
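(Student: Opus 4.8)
The plan is to combine the exact description of the law of the hull provided by Lemma~\ref{lem:lawhull} with a count of the vertices of $B^\bullet_r(\mathbf T_\infty)$ through its skeleton decomposition, and then to observe that the extra parameter $t$ can be inserted at no cost. First I would reduce to Lemma~\ref{lem:lawhull}: the root edge of $\mathbf T_\infty$ is a loop, so $|\partial B^\bullet_0(\mathbf T_\infty)|=1$ and $B^\bullet_r(\mathbf T_\infty)=L^\bullet_{0,r}(\mathbf T_\infty)$, hence Lemma~\ref{lem:lawhull} applies with $r'=0$, $p=1$. Thus for a triangulation $\Delta$ of an $(r,1,q)$-cylinder, with skeleton $F=\mathrm{Skel}(\Delta)\in\mathcal F(r,1,q)$, slot triangulations $(M_v)_{v\in F^\star}$, and $n_v$ the number of inner vertices of $M_v$, one has $\mathbb P(B^\bullet_r(\mathbf T_\infty)=\Delta)=\frac{\alpha^q C(q)}{\alpha C(1)}\prod_{v\in F^\star}\alpha^{c_v-1}\rho^{n_v+1}$.

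The key step -- and the one I expect to require the most care -- is the identity
\[
|B^\bullet_r(\mathbf T_\infty)| \;=\; 1 + |F^\star| + \sum_{v\in F^\star} n_v ,
\]
obtained by sorting the vertices of $\Delta$ into two classes. First, the vertices lying on one of the cycles $\partial_0\Delta,\dots,\partial_r\Delta$: these cycles are pairwise vertex-disjoint since their vertices sit at pairwise distinct graph distances from the root vertex, $\partial_0\Delta$ carries the single root vertex, and for $1\le i\le r$ the vertices of $\partial_i\Delta$ are in bijection with its edges, hence with the vertices of $F$ at height $r-i$; as $i$ ranges over $1,\dots,r$ these account for exactly the $|F^\star|$ vertices of $F$ of height $<r$. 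Second, every remaining vertex lies strictly inside exactly one slot, where it is one of the $n_v$ inner vertices of $M_v$ -- distinct slots occupy disjoint regions, and an inner vertex of a slot is off its boundary, which is contained in $\bigcup_i\partial_i\Delta$. Getting this right only requires keeping track of the several conventions of Section~\ref{sec:skel}, but that bookkeeping is the crux of the argument.

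Granting the count, I would expand $\mathbb E\!\left[s^{|B^\bullet_r(\mathbf T_\infty)|}\right]=\sum_{q\ge1}\sum_\Delta \mathbb P(B^\bullet_r(\mathbf T_\infty)=\Delta)\,s^{|\Delta|}$ and use the bijectivity of the skeleton decomposition (Section~\ref{sec:skel}) to replace the sum over $\Delta$ by a sum over $q\ge1$, over $F\in\mathcal F(r,1,q)$, and over independent choices, for each $v\in F^\star$, of a triangulation $M_v$ of the $(c_v+2)$-gon. The product structure of both $\mathbb P(B^\bullet_r(\mathbf T_\infty)=\Delta)$ and of $s^{|\Delta|}=s\prod_{v\in F^\star}s^{\,1+n_v}$ lets the sum over the $M_v$ factor as $\prod_{v\in F^\star}\rho s\,\alpha^{c_v-1}\sum_{M}(\rho s)^{n_M}$, where the inner sum runs over triangulations $M$ of the $(c_v+2)$-gon and equals $T_{c_v+2}(\rho s)$ by the definition of $T_p$. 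This is precisely the asserted formula in the case $t=1$.

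It remains to restore the free parameter $t$, which is where the combinatorics of admissible forests enters. For a fixed $F\in\mathcal F(r,1,q)$ the children of the vertices of $F^\star$ are exactly the non-root vertices of $F$, so $\sum_{v\in F^\star}c_v=|F|-q$, while $|F^\star|=|F|-1$ because $F$ has exactly $p=1$ vertex at height $r$; hence $(q-1)+\sum_{v\in F^\star}(c_v-1)=0$. Therefore $t^{q-1}\prod_{v\in F^\star}t^{c_v-1}=1$ for every $t\in[0,1]$, and inserting this factor into the $t=1$ formula turns $\frac{\alpha^q}{\alpha}$ into $\frac{(\alpha t)^q}{\alpha t}$ and each $\alpha^{c_v-1}$ into $(\alpha t)^{c_v-1}$, while leaving $\rho s\,T_{c_v+2}(\rho s)$ untouched, producing the statement exactly.
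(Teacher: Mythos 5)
Your proposal is correct and follows the paper's own argument essentially step for step: the same reduction to Lemma \ref{lem:lawhull} with $r'=0$, $p=1$, the same vertex count $|\Delta|=1+\sum_{v\in F^\star}(n_v+1)$ (you merely spell out the bookkeeping between cycle vertices and slot interiors in more detail), the same factorization of the sum over slot fillings into $T_{c_v+2}(\rho s)$, and the same observation that $t^{q-1}\prod_{v\in F^\star}t^{c_v-1}=1$ lets the parameter $t$ be inserted for free.
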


\begin{remark}
This Lemma and Proposition \ref{prop:HullPhin} are in fact a consequences of Proposition \ref{prop:layervol}, but we still provide independent proofs because they provide a nice framework to introduce the functions $\varphi_t$ in \eqref{eq:exprPhit} that are central to this work.
\end{remark}

\begin{proof}[Proof of Lemma \ref{Lem:hullvolgen}]
Fix $r,q>0$ and $\Delta$ a triangulation of the $(r,1,q)$-cylinder having $F \in \mathcal{F}(r,1,q)$ as skeleton. Recall that for every $v \in F^\star$ we denote by $c_v$ the number of children of $v$ in $F$ and by $n_v$ the number of inner vertices of the triangulation of the $(c_v + 2)$-gon filling the slot associated to $v$. With these notations we have
\[
|\Delta| - 1 = \sum_{v \in F^\star} (n_v + 1),
\]
the $-1$ taking into account that the right hand side of the previous equality does not count the unique vertex of height $r$ in $F$. Lemma \ref{lem:lawhull} then gives
\[
s^{|\Delta|} \mathbb{P} \left( B^{\bullet}_R( \mathbf{T}_{\infty}) = \Delta \right)
= s \frac{\alpha^q C(q)}{\alpha C(1)} \prod_{v \in F^\star} \alpha^{c(v) -1} (s \rho)^{n_v + 1}
\]
and summing over every triangulation of the $(r,1,q)$-cylinder having $F$ as skeleton we obtain
\begin{align*}
\sum_{\Delta : \, \mathrm{Skel}(\Delta) = F} \, s^{|\Delta|} \mathbb{P} \left( B^{\bullet}_r( \mathbf{T}_{\infty}) = \Delta \right)
& = s \frac{\alpha^q C(q)}{\alpha C(1)} \prod_{v \in F^\star} \alpha^{c(v) -1} \sum_{n_v \geq 0} |\mathcal{T}_{n_v,c_v+2}| \cdot (s \rho)^{n_v + 1}\\
& = s \frac{\alpha^q C(q)}{\alpha C(1)}
\prod_{v \in F^{\star}} \rho s \cdot \alpha^{c(v) -1} \cdot T_{c(v) +2} (\rho s).
\end{align*}
Since for any $t\in [0,1]$ and any forest $F \in \mathcal{F}(r,1,q)$ one has
\[
\prod_{v \in F^{\star}} t^{c(v) -1} = t^{1-q},
\]
we can write
\begin{align*}
\sum_{\Delta : \, \mathrm{Skel}(\Delta) = F} \, s^{|\Delta|} \mathbb{P} \left( B^{\bullet}_R( \mathbf{T}_{\infty}) = \Delta \right)
& = s \frac{(\alpha t)^q C(q)}{\alpha t C(1)}
\prod_{v \in F^{\star}} \rho s \cdot (\alpha t)^{c(v) -1} \cdot T_{c(v) +2} (\rho s)
\end{align*}
and the result follows by summing over $q \geq 1$ and over every $(r,1,q)$-admissible forest.
\end{proof}

\bigskip

As was done for Lemma \ref{lem:lawhull}, we want to interpret the numbers
$\left( \rho s \cdot (\alpha t)^{i -1} \cdot T_{i +2} (\rho s) \right)_{i\geq 0}$ appearing in Lemma \ref{Lem:hullvolgen} as an offspring probability distribution. For $(s,t) \in [0,1]^2$, the generating function of these numbers is defined, for every $u\in [0,1]$, by
\begin{align*}
\Phi_{s,t}(u) &= \sum_{i \geq 0} \rho s \cdot (\alpha t)^{i -1} \cdot T_{i +2} (\rho s) u^i
=\frac{\rho s}{(\alpha t)^2 u} \left(T(\rho s, \alpha t u) - T(\rho s ,0) \right).
\end{align*}
The functions $\Phi_{s,t}$ are clearly non negative and increasing, thus we just have to pick $(s,t)$ such that $\Phi_{s,t}(1) = 1$. Using formulas \eqref{eq:expT0h} and \eqref{eq:expTxy}, simple computations yield
\[
\Phi_{s,t}(1) = \frac{
6 t \cdot \alpha^{-1} h (\rho s)
- 3 t \cdot \alpha^{-2} h^2 (\rho s) - 2 s^2
+2s\sqrt{s^2 + 3 t^2 -t^3+ 3 t \cdot \alpha^{-2} h^2 (\rho s) - 6 t \cdot \alpha^{-1} h (\rho s)}
}{t^3}.
\]
To solve this equation, we first notice that, from equation \eqref{eq:h} satisfied by $h$, we have
\[
s^2 = \alpha^{-2} h^2(\rho s) \left(3 - 2 \alpha^{-1} h(\rho s) \right).
\]
This suggests to consider $t(s) \in [0,1]$ such that
\[
h(\rho s) = \alpha t(s),
\]
or equivalently with equation \eqref{eq:h},
\[
s = t(s)^2 \left(3 - 2 t(s) \right).
\]
This parametrization yields
\begin{align*}
\Phi_{s,t(s)}(1) & = \frac{
6  t^2
- 3 t^3 - 2 s^2
+2s\sqrt{s^2 - 3 t^2 + 2 t^3}
}{t^3} =1.
\end{align*}

From now on, we will only consider pairs $(s,t) \in [0,1]^2$ such that $s = t \sqrt{3-2t}$. For such pairs we define, for every $u\in[0,1]$,
\begin{align*}
\varphi_t(u) & := \Phi_{s,t(s)}(u) \\
& = \frac{
6  u t^2 - 3 u t^3 - 2 s^2
+2s\sqrt{s^2 + 3 t^2 u^2 -t^3 u^3 + 3 t^3 u- 6 t^2 u}
}{t^3 u^2}
\end{align*}
which is the generating function of a probability distribution. Simple computations give the following alternative expression:
\begin{equation}
\label{eq:exprPhit}
\varphi_t(u)= 1 - \left( \frac{1}{\sqrt{1-u}} \sqrt{\frac{3 -2t}{t}} + \sqrt{1 + \frac{3}{1-u} \left(\frac{1 -t}{t} \right)}\right)^{-2}.
\end{equation}
This expression is not unlike the expression of $\varphi$ given in Lemma \ref{lem:lawhull}, and $\varphi_1 = \varphi$ which is no surprise.

\bigskip

The next result gives an expression of the generating function of the volume of hulls of the UIPT in terms of iterates of the functions $\varphi_t$. We will see in its proof that it takes advantage of the branching process associated to $\varphi_t$.

\begin{proposition}
\label{prop:HullPhin}
Fix $r > 0$ and a pair $(s,t) \in [0,1]^2$ such that $s=t\sqrt{3-2t}$, then
\[
\mathbb{E} \left[ s^{|B^{\bullet}_r(\mathbf{T}_{\infty})| } \right] =
s
\left( 1 - t \varphi_t^{\{ r\}} (0) \right)^{-3/2} 
\varphi_t^{\{ r\}'}(0)
\]
where $\varphi_t^{\{ r\}} (u) = \underbrace{\varphi_t \circ \cdots \circ \varphi_t}_{r \text{ times}}(u)$. 
\end{proposition}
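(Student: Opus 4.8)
\textbf{Proof plan for Proposition \ref{prop:HullPhin}.}
The plan is to start from the expression given by Lemma \ref{Lem:hullvolgen}, namely
\[
\mathbb{E} \left[ s^{|B^{\bullet}_r(\mathbf{T}_{\infty})| } \right] =
s \sum_{q \geq 1}
\frac{(\alpha t)^q C(q)}{\alpha t C(1)}
\sum_{F \in \mathcal{F}(r,1,q)} \prod_{v \in F^{\star}} \rho s \cdot (\alpha t)^{c(v) -1} \cdot T_{c(v) +2} (\rho s),
\]
and to recognize the inner sum as a branching-process probability exactly as in the computation of $\mathbb{P}(|\partial B^\bullet_r(\mathbf{T}_\infty)| = q)$ carried out after Lemma \ref{lem:lawhull}. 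Since $s = t\sqrt{3-2t}$, the numbers $\bigl(\rho s \cdot (\alpha t)^{i-1} \cdot T_{i+2}(\rho s)\bigr)_{i \geq 0}$ are the offspring probabilities of a (sub)critical Galton--Watson distribution with generating function $\varphi_t$ from \eqref{eq:exprPhit}. As in the example after Lemma \ref{lem:lawhull}, I would pass from $\mathcal{F}(r,1,q)$ to the set $\mathcal{F}'(r,1,q)$ of such forests up to circular permutation, picking up a factor $1/q$, so that
\[
\sum_{F \in \mathcal{F}(r,1,q)} \prod_{v \in F^{\star}} \rho s \cdot (\alpha t)^{c(v)-1} \cdot T_{c(v)+2}(\rho s)
= \frac{1}{q} \, [u] \bigl( \varphi_t^{\{r\}}(u) \bigr)^q,
\]
the right-hand side being the probability that a Galton--Watson process with offspring generating function $\varphi_t$, started from $q$ particles, has exactly one particle at generation $r$.

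Next I would substitute this into the formula of Lemma \ref{Lem:hullvolgen}, using the explicit value $C(q) = \frac{3^{q-2} q (2q)!}{4\sqrt{2\pi}(q!)^2}$ and $C(1)$ to get the prefactor $\frac{(\alpha t)^q C(q)}{\alpha t C(1)}$ in closed form. This yields
\[
\mathbb{E}\left[ s^{|B^\bullet_r(\mathbf{T}_\infty)|} \right]
= s \sum_{q \geq 1} \frac{(\alpha t)^q C(q)}{\alpha t C(1)} \cdot \frac{1}{q} \, [u]\bigl(\varphi_t^{\{r\}}(u)\bigr)^q.
\]
Writing $\psi(u) = \varphi_t^{\{r\}}(u)$ and noting $\psi(0) = \varphi_t^{\{r\}}(0)$, $\psi'(0) = \varphi_t^{\{r\}\prime}(0)$ (using that $\psi$ is a power series with nonnegative coefficients and $\psi(0) < 1$ for the sub/critical process as long as $t < 1$; the case $t=1$ follows by continuity), the coefficient extraction gives $[u]\psi(u)^q = q\,\psi(0)^{q-1}\psi'(0)$. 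Hence
\[
\mathbb{E}\left[ s^{|B^\bullet_r(\mathbf{T}_\infty)|} \right]
= \frac{s}{\alpha t \, C(1)} \, \psi'(0) \sum_{q \geq 1} (\alpha t)^q C(q) \, \psi(0)^{q-1}
= \frac{s}{\alpha^2 t^2 \, C(1)} \, \psi'(0) \sum_{q \geq 1} C(q) \, (\alpha t \, \psi(0))^{q}.
\]
The final step is to evaluate the generating function $\sum_{q \geq 1} C(q) z^q$ in closed form. From $C(q) = \frac{3^{q-2}q(2q)!}{4\sqrt{2\pi}(q!)^2} = \frac{1}{4\sqrt{2\pi}} \cdot \frac{1}{3} \cdot q \binom{2q}{q} 3^q$, and using $\sum_{q \geq 0} \binom{2q}{q} w^q = (1-4w)^{-1/2}$ together with the identity $\sum_{q \geq 1} q\binom{2q}{q} w^q = 2w(1-4w)^{-3/2}$, one gets $\sum_{q \geq 1} C(q) z^q = c \cdot z (1-12z)^{-3/2}$ for an explicit constant $c$. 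Plugging $z = \alpha t \psi(0)$ and $\alpha = 1/12$ produces the factor $\bigl(1 - t\,\varphi_t^{\{r\}}(0)\bigr)^{-3/2}$, and bookkeeping of the remaining powers of $\alpha$, $t$, $z$ and the constants (which should all collapse, using $C(1) = \frac{3^{-1} \cdot 2}{4\sqrt{2\pi}}$ and matching with the $c$ above) leaves exactly $s \bigl(1 - t\varphi_t^{\{r\}}(0)\bigr)^{-3/2} \varphi_t^{\{r\}\prime}(0)$, as claimed.

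\textbf{Main obstacle.} The conceptual content --- recognizing the forest sum as a branching-process event and extracting the $[u]$-coefficient --- is routine given the template already in the paper. The delicate part is the bookkeeping of the constants: one must verify that all powers of $\alpha = 1/12$, $3$, $\sqrt{2\pi}$ coming from $C(q)$, $C(1)$, and the closed form of $\sum_q C(q) z^q$ cancel to leave precisely the stated clean formula, and that $1 - 12 \alpha t \psi(0) = 1 - t\varphi_t^{\{r\}}(0)$ as needed. A minor additional point is justifying the coefficient extraction and the interchange of sums: this is legitimate because all terms are nonnegative and, for $s,t \in [0,1)$, everything converges (the series $\sum_q C(q)(\alpha t \psi(0))^q$ converges since $\alpha t \psi(0) < \alpha = \rho \cdot 12\sqrt 3 /(12\sqrt3)$... more simply since $\psi(0) \le 1$, $\alpha t \le \alpha = 1/12 < 1/12$ is the radius); the endpoint $s = t = 1$, where $|B^\bullet_r(\mathbf{T}_\infty)| = \infty$ a.s. and both sides vanish, follows by monotone convergence.
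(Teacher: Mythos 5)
Your proposal is correct and follows essentially the same route as the paper: interpret the forest sum as the probability that a Galton--Watson process with offspring generating function $\varphi_t$ started from $q$ particles has one particle at generation $r$, then sum over $q$ using the explicit form of $C(q)$ and the central binomial series; the only (immaterial) difference is that you extract the coefficient $[u]\psi(u)^q = q\,\psi(0)^{q-1}\psi'(0)$ before summing over $q$, whereas the paper first sums over $q$ to get the closed form $\frac{2}{t}\bigl((1-t\varphi_t^{\{r\}}(u))^{-1/2}-1\bigr)$ and then extracts $[u]$. Two small corrections: in your second display the factor should be $\frac{s}{\alpha t\,C(1)\,\psi(0)}$ rather than $\frac{s}{\alpha^2 t^2\,C(1)}$ (since $(\alpha t)^q\psi(0)^{q-1} = (\alpha t\psi(0))^q/\psi(0)$) --- with this fix the constants do collapse as you claim, using $C(1)=\frac{1}{6\sqrt{2\pi}}$; and at the endpoint $s=t=1$ the hull $B^\bullet_r(\mathbf{T}_\infty)$ is a.s.\ \emph{finite}, so both sides equal $1$ (not $0$), as one checks from $\varphi_1^{\{r\}}(0)=1-(r+1)^{-2}$ and $\varphi_1^{\{r\}\prime}(0)=(r+1)^{-3}$.
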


\begin{proof}
First, we interpret the sum over forests in $\mathcal{F}(r,1,q)$ appearing in the statement of Lemma \ref{lem:lawhull} as the probability of an event for a branching process with offspring distribution given by $\varphi_t$. To do that we first write
\begin{align}
\sum_{F \in \mathcal{F}(r,1,q)} \quad \prod_{v \in F^\star} \rho s \cdot (\alpha t)^{c(v) -1} \cdot T_{c(v) +2} (\rho s) &= \sum_{F \in \mathcal{F}(r,1,q)} \quad \prod_{v \in F^\star} [u^{c_v}] \varphi_t (u) \notag\\
& = \frac{1}{q} \sum_{F \in \mathcal{F}'(r,1,q)} \quad \prod_{v \in F^\star} [u^{c_v}] \varphi_t (u) \label{eq:BPforest}
\end{align}
where $\mathcal{F}'(r,1,q)$ is the set of all ordered forests of $q$ rooted plane trees of height lesser than or equal to $r$ and having exactly one vertex at height $r$. The forests in $\mathcal{F}'(r,1,q)$ are obtained from the forests in $\mathcal{F}(r,1,q)$ by a circular permutation of the order of their trees, so that the vertex at height $r$ does not necessarily belong to the first tree, explaning the factor $\frac{1}{q}$. But now, the right hand side of \eqref{eq:BPforest} without this factor $\frac{1}{q}$ is exactly the probability that a Galton-Watson branching process with offspring distribution given by $\varphi_t$ started with $q$ particles has exaclty one particle at generation $r$. This probability is $[u] \left(\varphi_t^{\{ r\}} (u) \right)^q$ and thus
\begin{align*}
\mathbb{E} \left[ s^{|B^{\bullet}_r(\mathbf{T}_{\infty})|} \right] & =
s\sum_{q \geq 1} \,
\frac{(\alpha t)^q C(q)}{\alpha t C(1)} \,
\frac{1}{q} \, [u] \left(\varphi_t^{\{ r\}} (u) \right)^q,\\
& = s \, [u] \frac{1}{ 6 \alpha t} \,
\sum_{q\geq 1} \, \binom{2q}{q} \left(3 \alpha t \varphi_t^{\{ r\}} (u) \right)^q,\\
& = s \, [u] \frac{\left(1-4 \cdot 3 \alpha t \varphi_t^{\{ r\}} (u)\right)^{-1/2} -1}{6 \alpha t},\\
& = s \, [u] \frac{2}{t} \left(\left(1-t \varphi_t^{\{ r\}} (u)\right)^{-1/2} -1 \right),\\
& = s\left( 1 - t \varphi_t^{\{ r\}} (0) \right)^{-3/2} 
[u]\varphi_t^{\{ r\}}(u),
\end{align*}
giving the result.
\end{proof}

\subsection{Explicit computations and proof of Theorem \ref{th:volhull}}

Before proving Theorem \ref{th:volhull}, let us first compute explicitely the iterates $\varphi_t^{\{ r\}}$ appearing in Proposition \ref{prop:HullPhin}:
\begin{lemma}
\label{lem:ExpPhin}
Fix $t\in[0,1[$ and $r\in \mathbb{N}$, then, for every $u\in[0,1]$,
\[
\varphi_t^{\{ r\}} (u) = 1 - 3 \frac{1-t}{t \, \left( \sinh \left( \sinh^{-1} \left( \sqrt{\frac{3(1-t)}{t(1-u)}}\right) + r \cosh^{-1} \left(\sqrt{\frac{3-2t}{t}} \right) \right) \right)^2}
\]
and
\[
\varphi_1^{\{ r\}} (u) = 1 - \frac{1}{\left( \frac{1}{\sqrt{1-u}}+ r \right)^2}.
\]
\end{lemma}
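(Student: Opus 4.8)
The plan is to compute the iterates $\varphi_t^{\{r\}}$ by conjugating $\varphi_t$ into a map for which iteration is transparent. Looking at the expression \eqref{eq:exprPhit}, the natural move is to substitute $u \mapsto 1-\tfrac{1}{w^2}$ or, more precisely, to work with the variable $v$ defined by $\tfrac{1}{1-u} = $ (something like $\sinh^2$ of a shifted argument). Concretely, I would set $g_t(u) = \tfrac{1}{1-u}$ so that $\varphi_t$ becomes, after the substitution, a map acting on $\tfrac{1}{1-\varphi_t(u)}$. From \eqref{eq:exprPhit} we read off directly
\[
\frac{1}{1-\varphi_t(u)} = \left( \sqrt{\frac{3-2t}{t}}\,\frac{1}{\sqrt{1-u}} + \sqrt{1 + \frac{3}{1-u}\cdot\frac{1-t}{t}} \right)^2.
\]
So if I introduce $\psi_t(u) := \sqrt{\tfrac{3(1-t)}{t(1-u)}}$ (the quantity appearing under $\sinh^{-1}$ in the statement), then $\tfrac{1}{1-u} = \tfrac{t}{3(1-t)}\psi_t(u)^2$ and $1 + \tfrac{3}{1-u}\tfrac{1-t}{t} = 1 + \psi_t(u)^2$, while $\tfrac{1}{\sqrt{1-u}} = \sqrt{\tfrac{t}{3(1-t)}}\,\psi_t(u)$. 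Substituting,
\[
\frac{1}{1-\varphi_t(u)} = \left( \sqrt{\tfrac{3-2t}{3(1-t)}}\,\psi_t(u) + \sqrt{1+\psi_t(u)^2} \right)^2.
\]

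The key observation is that $\sqrt{\tfrac{3-2t}{3(1-t)}} = \cosh\big(\cosh^{-1}\sqrt{\tfrac{3-2t}{t}}\big)/\,?$ — more usefully, one checks $\cosh^2(c) = \tfrac{3-2t}{t}$ for $c := \cosh^{-1}\sqrt{\tfrac{3-2t}{t}}$, hence $\sinh^2 c = \tfrac{3-2t}{t} - 1 = \tfrac{3(1-t)}{t}$, i.e. $\sinh c = \sqrt{\tfrac{3(1-t)}{t}}$ and $\cosh c = \sqrt{\tfrac{3-2t}{t}}$, so $\sqrt{\tfrac{3-2t}{3(1-t)}} = \cosh c/\sinh c = \coth c$. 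That is not quite the clean form; instead write $\psi_t(u) = \sinh(a)$ for $a := \sinh^{-1}\psi_t(u)$, so that $\sqrt{1+\psi_t(u)^2} = \cosh a$. Then, using $\sqrt{\tfrac{3-2t}{3(1-t)}}\sinh a + \cosh a$, and noting $\sqrt{\tfrac{3-2t}{3(1-t)}} = \tfrac{\cosh c}{\sinh c}$, we get $\tfrac{1}{\sinh c}\big(\cosh c \sinh a + \sinh c \cosh a\big) = \tfrac{\sinh(a+c)}{\sinh c}$. Therefore
\[
\frac{1}{1-\varphi_t(u)} = \frac{\sinh^2(a+c)}{\sinh^2 c} = \frac{t}{3(1-t)}\,\sinh^2\!\big(\sinh^{-1}\psi_t(u) + c\big),
\]
which rearranges to $\psi_t(\varphi_t(u)) = \sinh^{-1}$-conjugate: precisely $\sinh^{-1}\psi_t(\varphi_t(u)) = \sinh^{-1}\psi_t(u) + c$. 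In other words, in the coordinate $\beta(u) := \sinh^{-1}\sqrt{\tfrac{3(1-t)}{t(1-u)}}$, the map $\varphi_t$ acts as the translation $\beta \mapsto \beta + c$ with $c = \cosh^{-1}\sqrt{\tfrac{3-2t}{t}}$.

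Once that conjugation is verified, the iterate is immediate: $\beta(\varphi_t^{\{r\}}(u)) = \beta(u) + rc$, i.e.
\[
\sinh^{-1}\sqrt{\tfrac{3(1-t)}{t(1-\varphi_t^{\{r\}}(u))}} = \sinh^{-1}\sqrt{\tfrac{3(1-t)}{t(1-u)}} + r\,\cosh^{-1}\sqrt{\tfrac{3-2t}{t}},
\]
and solving for $\varphi_t^{\{r\}}(u)$ gives exactly the claimed formula. For the degenerate case $t=1$, one takes the limit $t\to 1$: here $c = \cosh^{-1}1 = 0$ is the wrong scaling, so instead I would redo the computation directly from $\varphi_1(u) = \varphi(u) = 1-(1+\tfrac{1}{\sqrt{1-u}})^{-2}$ of Lemma \ref{lem:lawhull}, observing that in the coordinate $\gamma(u) := \tfrac{1}{\sqrt{1-u}}$ the map $\varphi_1$ becomes $\tfrac{1}{\sqrt{1-\varphi_1(u)}} = 1 + \tfrac{1}{\sqrt{1-u}}$, i.e. the translation $\gamma \mapsto \gamma+1$; iterating $r$ times gives $\gamma \mapsto \gamma + r$ and hence the stated formula for $\varphi_1^{\{r\}}$. (Alternatively one checks this is the $t\to1$ limit after rescaling $\beta$ by $\sinh^{-1}\sqrt{3(1-t)/t} \to 0$ and $c\to 0$ at comparable rates.)

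The main obstacle is purely computational bookkeeping: verifying the identity $\sqrt{\tfrac{3-2t}{3(1-t)}}\sinh a + \cosh a = \tfrac{\sinh(a+c)}{\sinh c}$, which requires pinning down that $\cosh c = \sqrt{\tfrac{3-2t}{t}}$ and $\sinh c = \sqrt{\tfrac{3(1-t)}{t}}$ are consistent (they are, since $\cosh^2 c - \sinh^2 c = \tfrac{3-2t}{t} - \tfrac{3-3t}{t} = \tfrac{t}{t} = 1$), and then carefully tracking the algebra that turns \eqref{eq:exprPhit} into the $\sinh^2$ form. There is also a minor subtlety in checking the solution $t$ of $s^2 = t^2(3-2t)$ used here is consistent with the parametrization $s = t\sqrt{3-2t}$ fixed earlier in the section — these agree on $[0,1]$. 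None of this is deep; once the right conjugating coordinate $\beta$ is guessed, the proof is a short verification plus the observation that iterating a translation is trivial.
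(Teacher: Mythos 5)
Your proposal is correct and follows essentially the same route as the paper: both identify the coordinate $w(u)=\sinh^{-1}\sqrt{\tfrac{3(1-t)}{t(1-u)}}$ (the paper writes it as $\sinh w_n = \sqrt{a^2-1}\,v_n$ with $v_n=(1-\varphi_t^{\{n\}}(u))^{-1/2}$ and $a=\sqrt{(3-2t)/t}$) in which $\varphi_t$ becomes the translation by $\cosh^{-1}(a)$, and both treat $t=1$ separately as an arithmetic progression in $1/\sqrt{1-u}$. The verification of $\cosh c=\sqrt{(3-2t)/t}$, $\sinh c=\sqrt{3(1-t)/t}$ and the addition formula is exactly the computation the paper performs.
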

\begin{proof}
Fix $t,u \in [0,1]$ and, for every $n\in \mathbb{N}$, denote
\[
v_n = \frac{1}{\sqrt{1 - \varphi_t^{\{ n\}} (u)}}.
\]
From the expression of $\varphi_t$ given in equation \eqref{eq:exprPhit} we deduce that the sequence $(v_n)_{n \geq 0}$ satisfies
\begin{equation}
\label{eq:defvn}
\begin{cases}
v_0 = \frac{1}{\sqrt{1 -u}}\\
v_{n+1} = a v_n + \sqrt{1+(a^2 -1) v_n^2}
\end{cases}
\end{equation}
with 
\[
a = \sqrt{\frac{3-2t}{t}} \geq 1.
\]
If $a=1$, the sequence $(v_n)$ has arithmetic progression and the result is trivial. Therefore we suppose $t <1$, and thus $a >1$. Define $w_n > 0$ such that
\[
\sinh (w_n) = \sqrt{a^2 -1} \, v_n;
\]
then the recursion relation \eqref{eq:defvn} satisfied by $(v_n)_{n \geq 0}$ becomes
\begin{align*}
\sinh(w_{n+1}) &= a \sinh(w_n) + \sqrt{a^2 -1} \cosh(w_n)\\
& = \sinh \left( w_n + \cosh^{-1}(a) \right).
\end{align*}
This shows that the sequence $(w_n)$ has arithmetic progression and we have, for every $n \geq 0$,
\begin{align*}
w_n &= \sinh^{-1} \left( \sqrt{a^2 -1} \, v_0 \right) + n \cosh^{-1}(a)
\end{align*}
and the result follows easily.
\end{proof}

\begin{remark}
The sequence $(v_n)$ defined by \eqref{eq:defvn} satisfies the following second order linear recursion
\[
v_{n+1} = 2a v_n - v_{n-1}
\]
that can be derived directly from \eqref{eq:defvn} by noticing that
\[
v_{n+1}^2 - 2 a v_n v_{n+1} + v_n^2 = 1
\]
yielding
\[
0 = v_{n+1}^2 - 2 a v_n (v_{n+1} - v_{n-1}) - v_{n-1}^2 = (v_{n+1} - v_{n-1})(v_{n+1} - 2 a v_n + v_{n-1}).
\]
This gives an alternate derivation of $v_n$ where hyperbolic functions do not appear directly.
\end{remark}

\begin{proof}[Proof of Theorem \ref{th:volhull}]
Theorem \ref{th:volhull} is now a direct consequence of Proposition \ref{prop:HullPhin} and Lemma \ref{lem:ExpPhin}. Indeed we have from Lemma \ref{lem:ExpPhin}
\[
\varphi_t^{\{ r\}} (0) = 1 - 3 \frac{1-t}{t \, \left( \sinh \left( \sinh^{-1} \left( \sqrt{3\frac{1-t}{t}}\right) + r \cosh^{-1} \left(\sqrt{\frac{3-2t}{t}} \right) \right) \right)^2}
\]
and
\begin{align*}
[u] \varphi_t^{\{ r\}} (u) &=\left(3 \frac{1-t}{t} \right)^{3/2} \left( \frac{3-2t}{t} \right) ^{-1/2}
\left( \sinh \left( \sinh^{-1} \left( \sqrt{3\frac{1-t}{t}}\right) + r \cosh^{-1} \left(\sqrt{\frac{3-2t}{t}} \right) \right) \right)^{-3}\\
& \quad \times \cosh \left( \sinh^{-1} \left( \sqrt{3\frac{1-t}{t}}\right) + r \cosh^{-1} \left(\sqrt{\frac{3-2t}{t}} \right) \right).
\end{align*}
The result then follows from Proposition \ref{prop:HullPhin}.
\end{proof}

The proof of Corollary \ref{cor:hullcond} relies on Proposition \ref{prop:layervol} proved in the next Section but we give it here since it is more in the spirit of this section.

\begin{proof}[Proof of Corollary \ref{cor:hullcond}]
Proposition \ref{prop:layervol} gives with $r' = 0$ and $p=1$:
\[
\mathbb E \left[ s^{|B^{\bullet}_r(\mathbf{T}_{\infty})|} \middle| |\partial B^{\bullet}_r(\mathbf{T}_{\infty})| = q \right] = s t^{q-1} 
\frac{\left( \varphi_t^{\{ r\}} (0) \right)^{q-1} \varphi_t^{\{ r\} \prime} (0)}{\left( \varphi_1^{\{ r\}} (0) \right)^{q-1} \varphi_1^{\{ r\} \prime} (0)}.
\]
Putting $s = e^{-\lambda/R^4}$, $r = \lfloor x R \rfloor$ and $q = \lfloor \ell R^2 \rfloor$ we have the following asymptotics:
\begin{align*}
t^{q} &= 1- \frac{\sqrt{2\lambda/3}}{R^2} + \mathcal{O} \left( \frac{1}{R^4} \right),\\
\varphi_{t}^{\lfloor x R \rfloor} (0) 
& = 1 - \frac{\sqrt{6 \lambda}}{R^2}
\left( 
\sinh \left( (6 \lambda )^{1/4} x \right)
\right)^{-2} + \mathcal{O} \left( \frac{1}{R^4} \right),\\
\varphi_{1}^{\lfloor x R \rfloor} (0) 
& = 1 - \frac{1}{(\lfloor x R \rfloor)^2},\\
\varphi_{t}^{\lfloor x R \rfloor '} (0)
& \sim
\left( \frac{\sqrt{6 \lambda}}{R^2} \right)^{3/2} \frac{\cosh \left( (6 \lambda )^{1/4} x \right)}{\left( \sinh \left( (6 \lambda)^{1/4} x \right) \right)^3},\\
\varphi_{1}^{\lfloor x R \rfloor '} (0)
& \sim \frac{1}{(x R)^3},
\end{align*}
and the result follows easily.
\end{proof}

\section{Hull volume process}
\label{sec:hullproc}

In order to prove Theorem \ref{th:hullproc}, we first compute the generating function of the volume of layers of the UIPT:

\begin{proposition}
\label{prop:layervol}
Let $r,r',p,q$ be nonegative integers and $(s,t) \in [0,1]$ such that $s = t \sqrt{3-2t}$, then
\[
\mathbb{E} \left[ s^{|L^{\bullet}_{r',r'+r}(\mathbf{T}_{\infty})|} \middle| |\partial B^{\bullet}_{r'+r}(\mathbf{T}_{\infty})| = q, |\partial B^{\bullet}_{r'}(\mathbf{T}_{\infty})| = p \right] = s^p t^{q-p} \frac{[u^p] \left( \varphi_t^{\{r\}}(u) \right)^q}{[u^p] \left( \varphi_1^{\{r\}}(u) \right)^q}.
\]
\end{proposition}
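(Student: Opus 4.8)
\textbf{Proof plan for Proposition \ref{prop:layervol}.} The strategy mirrors the proofs of Lemma \ref{Lem:hullvolgen} and Proposition \ref{prop:HullPhin}, replacing the root face of degree $1$ by a root face of degree $p$ and keeping track of the exterior perimeter $q$. First I would fix a triangulation $\Delta$ of the $(r,p,q)$-cylinder with skeleton $F \in \mathcal{F}(r,p,q)$, and use the identity $|\Delta| - p = \sum_{v \in F^\star}(n_v + 1)$ (each inner vertex of a slot, plus one vertex per slot for the bottom vertices of the down triangles, the $-p$ discounting the $p$ vertices at height $r$). Multiplying the law of Lemma \ref{lem:lawhull} by $s^{|\Delta|}$ and summing over all $\Delta$ with $\mathrm{Skel}(\Delta)=F$ turns each factor $\rho^{n_v+1}$ into $\rho s \cdot T_{c_v+2}(\rho s)$ exactly as in the proof of Lemma \ref{Lem:hullvolgen}; inserting the telescoping identity $\prod_{v\in F^\star} t^{c_v-1} = t^{p-q}$ (valid for any $(r,p,q)$-admissible forest, since the forest has $q$ roots, $p$ vertices at height $r$, and the number of edges equals the number of non-root vertices) lets me rewrite the weight of each slot as $[u^{c_v}]\varphi_t(u)$ up to the global prefactor $s^p t^{q-p}$.

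Next I would sum over forests. The quantity $\sum_{F\in\mathcal F(r,p,q)}\prod_{v\in F^\star}[u^{c_v}]\varphi_t(u)$ is, by the standard Galton--Watson reading already used twice in the paper, the probability (up to combinatorial normalisation) that a GW process with offspring generating function $\varphi_t$ started from $q$ particles has exactly $p$ particles at generation $r$ \emph{with} the additional marking that the distinguished height-$r$ vertex lies in the first tree; this probability equals $[u^p]\bigl(\varphi_t^{\{r\}}(u)\bigr)^q$ after accounting for the $p/q$ factor coming from which of the $p$ marked vertices we distinguish and which tree it sits in — exactly the same bookkeeping as in \eqref{eq:BPforest}, where the $p=1$ case produced the factor $1/q$. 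Carrying this out for both the numerator (with weights $\varphi_t$, i.e. tracking volume) and, with $s=t=1$, the denominator (which is just $\mathbb{P}(|\partial B^\bullet_{r'+r}|=q,\ |\partial B^\bullet_{r'}|=p)$ up to the universal $\alpha^qC(q)/\alpha^pC(p)$ prefactors), and forming the conditional expectation, all the $\alpha, C(q), C(p), p, q$ combinatorial factors cancel and one is left with $s^p t^{q-p}\,[u^p](\varphi_t^{\{r\}}(u))^q\big/[u^p](\varphi_1^{\{r\}}(u))^q$.

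The point requiring the most care is the combinatorial correspondence between $(r,p,q)$-admissible forests and the branching process event when $p>1$: with several vertices at the top height $r$ one must be careful about the distinguished-vertex-in-the-first-tree convention in the definition of $\mathcal F(r,p,q)$, and about the circular-permutation passage from $\mathcal F(r,p,q)$ to the "free" forests counted by $[u^p](\varphi_t^{\{r\}})^q$. The cleanest route is probably to observe that the combinatorial prefactors entering numerator and denominator are \emph{identical} functions of $(p,q,r)$ — they depend only on the forest structure, not on the slot fillings — so that whatever normalisation constant relates $\sum_{F\in\mathcal F(r,p,q)}\prod_v[u^{c_v}]g(u)$ to $[u^p](g^{\{r\}})^q$ for a generic offspring law $g$, it cancels in the ratio; then it suffices to verify the identity for the single choice $g=\varphi_1$, where the left-hand side is literally $\alpha^pC(p)/(\alpha^qC(q))\cdot\mathbb P(|\partial B^\bullet_{r'+r}|=q,\,|\partial B^\bullet_{r'}|=p)$ by Lemma \ref{lem:lawhull} and the right-hand side is the GW probability, matching as in the derivation of \eqref{eq:lawPerim}. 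Once this structural cancellation is in place, the remaining steps are the same elementary generating-function manipulations ($\rho^{n_v+1}\mapsto\rho s\,T_{c_v+2}(\rho s)$ and the telescoping of $t^{c_v-1}$) already spelled out above, and the proposition follows.
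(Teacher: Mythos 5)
Your plan is correct and follows essentially the same route as the paper's proof: the identity $|\Delta|-p=\sum_{v\in F^\star}(n_v+1)$, the substitution $\rho^{n_v+1}\mapsto \rho s\,T_{c_v+2}(\rho s)$, the telescoping $\prod_{v}t^{c_v-1}=t^{p-q}$, and the passage from $\mathcal F(r,p,q)$ to unmarked forests via the factor $p/q$ are exactly the steps in the paper, and your observation that this combinatorial factor is independent of the offspring law (so it cancels against the denominator, which is the $s=t=1$ case) is precisely how the paper concludes. No gaps.
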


\begin{proof}
The proof of this Proposition is very much in the spirit of the proofs of Lemma \ref{Lem:hullvolgen} and Proposition \ref{prop:HullPhin}. Indeed, let $\Delta$ be a triangulation of the $(r,p,q)-$cylinder having $F \in \mathcal F (r,p,q)$ as skeleton. We have
\[
|\Delta| - p = \sum_{v \in F^\star} (n_v +1),
\]
giving, with Lemma \ref{lem:lawhull} and summing over every triangulation having $F$ as skeleton,
\begin{align*}
\sum_{\Delta : \, \mathrm{Skel}(\Delta) = F}  \, s^{|\Delta|} \mathbb{P} & \left( L^{\bullet}_{r',r'+ r}( \mathbf{T}_{\infty}) = \Delta 
\middle| |\partial B_{r'}^\bullet ( \mathbf{T}_{\infty})| = p
\right)\\
& \quad = s^p \frac{\alpha^q C(q)}{\alpha^p C(p)} \prod_{v \in F^\star} \alpha^{c(v) -1} \sum_{n_v \geq 0} |\mathcal{T}_{n_v,c_v+2}| (s \rho)^{n_v + 1}\\
& \quad = s^p \frac{\alpha^q C(q)}{\alpha^p C(p)}
\prod_{v \in F^{\star}} \rho s \cdot \alpha^{c(v) -1} \cdot T_{c(v) +2} (\rho s)\\
& \quad = s^p \frac{(\alpha t)^q C(q)}{(\alpha t)^p C(p)}
\prod_{v \in F^{\star}} [u^{c_v}]\varphi_t(u).
\end{align*}
Summing over every $(r,p,q)$-admissible forest then gives
\[
\mathbb{E} \left[ s^{|L^{\bullet}_{r',r'+r}(\mathbf{T}_{\infty})|}
\mathbf{1}_{ \big\{ |\partial B^{\bullet}_{r'+r}(\mathbf{T}_{\infty})| = q \big\}}
\middle|
|\partial B^{\bullet}_{r'}(\mathbf{T}_{\infty})| = p \right]
= s^p \frac{(\alpha t)^q C(q)}{(\alpha t)^p C(p)} 
\sum_{F \in \mathcal{F}(r,p,q)} \, \prod_{v \in F^\star} [u^{c_v}] \varphi_t (u).
\]
Now, if $\mathcal{F'}(r,p,q)$ denotes the set of all $(r,p,q)$-admissible forests up to a cyclic permutation of the order of the trees, each tree in $\mathcal{F}(r,p,q)$ corresponds to exactly $q$ trees of $\mathcal{F'}(r,p,q)$, therefore
\[
\mathbb{E} \left[ s^{|L^{\bullet}_{r',r'+r}(\mathbf{T}_{\infty})|}
\mathbf{1}_{ \big\{ |\partial B^{\bullet}_{r'+r}(\mathbf{T}_{\infty})| = q \big\}}
\middle|
|\partial B^{\bullet}_{r'}(\mathbf{T}_{\infty})| = p \right]
= s^p \frac{(\alpha t)^q C(q)}{(\alpha t)^p C(p)} \frac{1}{q}
\sum_{F \in \mathcal{F'}(r,p,q)} \, \prod_{v \in F^\star} [u^{c_v}] \varphi_t (u).
\]
The trees in $\mathcal{F'}(r,p,q)$ have a distinguished vertex at height $r$, and if $\mathcal{F''}(r,p,q)$ denotes the set of all rooted forests of height lesser than or equal to $r$, with $q$ trees, and having a total number $p$ of vertices at height $r$, each forest of $\mathcal{F''}(r,p,q)$ corresponds to exactly $p$ forests in $\mathcal{F'}(r,p,q)$, thus
\[
\mathbb{E} \left[ s^{|L^{\bullet}_{r',r'+r}(\mathbf{T}_{\infty})|}
\mathbf{1}_{ \big\{ |\partial B^{\bullet}_{r'+r}(\mathbf{T}_{\infty})| = q \big\}}
\middle|
|\partial B^{\bullet}_{r'}(\mathbf{T}_{\infty})| = p \right]
= s^p \frac{(\alpha t)^q C(q)}{(\alpha t)^p C(p)} \frac{p}{q}
\sum_{F \in \mathcal{F''}(r,p,q)} \, \prod_{v \in F^\star} [u^{c_v}] \varphi_t (u).
\]
But now the sum
\[
\sum_{F \in \mathcal{F''}(r,p,q)} \, \prod_{v \in F^\star} [u^{c_v}] \varphi_t (u)
\]
is the probability that a Galton-Watson process with offspring distribution given by $\varphi_t$ started with $q$ particles has $p$ particles at generation $r$. This yields
\[
\mathbb{E} \left[ s^{|L^{\bullet}_{r',r'+r}(\mathbf{T}_{\infty})|}
\mathbf{1}_{ \big\{ |\partial B^{\bullet}_{r'+r}(\mathbf{T}_{\infty})| = q \big\}}
\middle|
|\partial B^{\bullet}_{r'}(\mathbf{T}_{\infty})| = p \right]
= s^p \frac{(\alpha t)^q C(q)}{(\alpha t)^p C(p)} \frac{p}{q}
\, [u^p] \left( \varphi_t^{\{r\}} (u) \right)^q.
\]
Using the same reasoning, we can easily get
\[
\mathbb{P} \left( 
\big\{ |\partial B^{\bullet}_{r'+r}(\mathbf{T}_{\infty})| = q \big\}
\middle|
|\partial B^{\bullet}_{r'}(\mathbf{T}_{\infty})| = p
\right)
= \frac{\alpha^q C(q)}{\alpha^p C(p)} \frac{p}{q}
\, [u^p] \left( \varphi_1^{\{r\}} (u) \right)^q
\]
and the result follows.
\end{proof}

As we will see in the proof of Theorem \ref{th:hullproc}, the jumps of the process of hull perimeters will induce jumps for the process of hull volumes. This motivates the following technical result, which is a consequence of Proposition \ref{prop:layervol}, and will be used in the proof of Therem \ref{th:hullproc}.

\begin{corollary}
\label{cor:hulldiff}
Fix an integer $r > 0$ and $\ell > \delta > 0 $. Let $(p_n,q_n)_{n\geq 0}$ be non negative integers such that $n^{-2} p_n \to \ell$ and $n^{-2} q_n \to \ell - \delta$ as $n \to \infty$.
Then, conditionally on the events
\[
\big\{ |\partial B^{\bullet}_{r+1}(\mathbf{T}_{\infty})| = q_n \big\}
\cap
\big\{ |\partial B^{\bullet}_{r}(\mathbf{T}_{\infty})| = p_n \big\},
\]
the following convergence in distribution holds
\[
n^{-4} |B^{\bullet}_{r+1}(\mathbf{T}_{\infty}) \setminus B^{\bullet}_{r}(\mathbf{T}_{\infty})| \xrightarrow[n \to \infty]{(d)} \frac{4}{3} \delta^2 \cdot \xi
\]
where $\xi$ is a random variable with density $\frac{1}{\sqrt{2 \pi x^5}} e^{-\frac{1}{2x}} \mathbf{1}_{\{x >0\}}$.
\end{corollary}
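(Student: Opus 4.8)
\emph{Reduction to a coefficient estimate.} I would first apply Proposition~\ref{prop:layervol} with $r'=r$, height $1$ (so $\varphi_t^{\{1\}}=\varphi_t$ and $\varphi_1^{\{1\}}=\varphi_1$), boundary lengths $p=p_n$, $q=q_n$, and $s=e^{-\lambda/n^4}$, where $t=t(s)\in[0,1]$ solves $s=t\sqrt{3-2t}$. Since $L^{\bullet}_{r,r+1}(\mathbf{T}_{\infty})$ contains exactly the $p_n$ vertices of $\partial B^{\bullet}_r(\mathbf{T}_{\infty})$ in addition to the vertices of $B^{\bullet}_{r+1}(\mathbf{T}_{\infty})\setminus B^{\bullet}_r(\mathbf{T}_{\infty})$, the factor $s^{p}$ cancels and
\[
\mathbb{E}\!\left[e^{-\lambda n^{-4}|B^{\bullet}_{r+1}(\mathbf{T}_{\infty})\setminus B^{\bullet}_r(\mathbf{T}_{\infty})|}\;\middle|\;|\partial B^{\bullet}_{r+1}(\mathbf{T}_{\infty})|=q_n,\ |\partial B^{\bullet}_r(\mathbf{T}_{\infty})|=p_n\right]=t^{\,q_n-p_n}\,\frac{[u^{p_n}](\varphi_t(u))^{q_n}}{[u^{p_n}](\varphi_1(u))^{q_n}}.
\]
Then I would record an exact identity linking $\varphi_t$ to $\varphi_1$: setting $\beta=\beta(t):=3(1-t)/t\ge 0$ (so $\beta=0\iff t=1$ and $\sqrt{1+\beta}=\sqrt{(3-2t)/t}$), a direct computation from \eqref{eq:exprPhit} gives $[u^k]\varphi_t(u)=\bigl(1+\tfrac23\beta(k+2)\bigr)(1+\beta)^{-k-1}[u^k]\varphi_1(u)$ for all $k\ge0$, equivalently $\varphi_t(u)=\frac{1}{1+\beta}\bigl[(1+\tfrac43\beta)\varphi_1(v)+\tfrac23\beta\,v\varphi_1'(v)\bigr]$ with $v=u/(1+\beta)$. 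Substituting $v=u/(1+\beta)$ in the coefficient extraction yields
\[
[u^{p_n}](\varphi_t(u))^{q_n}=(1+\beta)^{-p_n-q_n}\,[v^{p_n}]\Bigl(\varphi_1(v)^{q_n}\bigl[1+\tfrac23\beta\,(2+G(v))\bigr]^{q_n}\Bigr),\qquad G(v):=\frac{v\varphi_1'(v)}{\varphi_1(v)}.
\]

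\emph{Singularity analysis.} Recall that $\varphi_1(u)=1-\bigl(1+(1-u)^{-1/2}\bigr)^{-2}$ has dominant singularity at $u=1$, with $\varphi_1(u)=u+2(1-u)^{3/2}+O((1-u)^2)$, hence $G(u)=1-3(1-u)^{1/2}+O(1-u)$ and $[u^k]\varphi_1(u)\sim\tfrac{3}{2\sqrt\pi}k^{-5/2}$. With $s=e^{-\lambda/n^4}$ one has, as in the discussion following Theorem~\ref{th:volhull}, $1-t\sim\sqrt{2\lambda/3}\,n^{-2}$, hence $\beta\sim\sqrt{6\lambda}\,n^{-2}$; put $\gamma_n:=\beta q_n\to\gamma:=\sqrt{6\lambda}(\ell-\delta)$ and $m_n:=p_n-q_n\sim\delta n^2$. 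Since $p_n/q_n\to\ell/(\ell-\delta)>1$, the coefficient $[v^{p_n}](\cdots)$ is a ``one big jump'' quantity: factoring out $v^{q_n}$ and writing the $q_n$-th powers as exponentials, the part of the exponent singular at $v=1$ is, up to the constant $2\gamma_n$ and terms negligible on the relevant scale $1-v\asymp m_n^{-1}$, equal to $2q_n(1-v)^{3/2}-2\gamma_n(1-v)^{1/2}$. A transfer theorem made uniform in $n$ (or a direct Hankel-contour estimate after the rescaling $1-v=w/m_n$) then gives
\[
[v^{p_n}]\Bigl(\varphi_1(v)^{q_n}\bigl[1+\tfrac23\beta(2+G(v))\bigr]^{q_n}\Bigr)\sim e^{2\gamma_n}\Bigl(2q_n+\tfrac43\gamma_n m_n\Bigr)[v^{m_n}](1-v)^{3/2},
\]
\[
[v^{p_n}]\varphi_1(v)^{q_n}\sim 2q_n\,[v^{m_n}](1-v)^{3/2},
\]
using $[v^m](1-v)^{1/2}\sim-\tfrac23\,m\,[v^m](1-v)^{3/2}$. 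Hence the ratio of these two coefficients is asymptotic to $e^{2\gamma_n}\bigl(1+\tfrac{2\gamma_n m_n}{3q_n}\bigr)$.

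\emph{Passing to the limit.} Using $(1+\beta)^{-p_n-q_n}\to e^{-(2\ell-\delta)\sqrt{6\lambda}}$, $t^{q_n-p_n}\to e^{\delta\sqrt{2\lambda/3}}$, $e^{2\gamma_n}\to e^{2(\ell-\delta)\sqrt{6\lambda}}$, $\tfrac{2\gamma_n m_n}{3q_n}\to\tfrac23\delta\sqrt{6\lambda}=\delta\sqrt{8\lambda/3}$, together with $\sqrt{2\lambda/3}-\sqrt{6\lambda}=-\sqrt{8\lambda/3}$, the exponential contributions collapse and
\[
\mathbb{E}\!\left[e^{-\lambda n^{-4}|B^{\bullet}_{r+1}(\mathbf{T}_{\infty})\setminus B^{\bullet}_r(\mathbf{T}_{\infty})|}\;\middle|\;|\partial B^{\bullet}_{r+1}|=q_n,\ |\partial B^{\bullet}_r|=p_n\right]\xrightarrow[n\to\infty]{}\bigl(1+\delta\sqrt{8\lambda/3}\bigr)e^{-\delta\sqrt{8\lambda/3}}.
\]
Since $\int_0^\infty e^{-\mu x}\frac{1}{\sqrt{2\pi x^5}}e^{-1/(2x)}\,dx=(1+\sqrt{2\mu})e^{-\sqrt{2\mu}}$, the right-hand side equals $\mathbb{E}\bigl[e^{-\lambda\cdot\frac43\delta^2\xi}\bigr]$ with $\mu=\tfrac43\delta^2\lambda$; as this holds for every $\lambda>0$, the stated convergence in distribution follows. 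The only genuinely delicate point is the uniformity in $n$ of the singularity analysis of the $q_n$-th powers in the previous paragraph: it must be controlled while simultaneously $q_n\to\infty$ and the singularity-shaping parameter $\beta$ tends to $0$ at the matching rate $\beta q_n\asymp 1$, so that the two singular exponents $(1-v)^{3/2}$ and $(1-v)^{1/2}$ contribute at the same order while all remaining terms are of strictly lower order.
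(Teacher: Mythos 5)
Your proposal is correct and reaches exactly the paper's intermediate limit $e^{-\delta\sqrt{6\lambda}}\bigl(1+\tfrac23\delta\sqrt{6\lambda}\bigr)$ for the coefficient ratio, hence the right Laplace transform $(1+\delta\sqrt{8\lambda/3})e^{-\delta\sqrt{8\lambda/3}}$. The overall strategy is the same as the paper's — reduce via Proposition~\ref{prop:layervol} (with layer height $1$, cancelling the $s^{p}$ prefactor against the $p_n$ boundary vertices) to the asymptotics of $[u^{p_n}](\varphi_t(u))^{q_n}/[u^{p_n}](\varphi_1(u))^{q_n}$, then extract the singular behaviour at $u=1$ — but your middle step is organized differently. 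The paper substitutes $z\mapsto 1+z/p_n$ in a Hankel contour integral and expands $\varphi_{t_n}(1+z/p_n)^{q_n}$ directly, with the $n$-dependence of $t_n$ entering the expansion; you instead prove the exact identity $\varphi_t(u)=\tfrac{1}{1+\beta}\bigl[(1+\tfrac43\beta)\varphi_1(v)+\tfrac23\beta\,v\varphi_1'(v)\bigr]$, $v=u/(1+\beta)$ (which I checked; it is correct), so that all $t$-dependence is pushed into explicit prefactors and the coefficient asymptotics concern only powers of $\varphi_1$, handled by a one-big-jump/local-limit estimate. This buys a cleaner bookkeeping of where the two singular exponents $(1-v)^{3/2}$ and $(1-v)^{1/2}$ come from, at the price of one extra algebraic identity. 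The one genuinely delicate point — uniformity of the singularity analysis as $q_n\to\infty$ with $\beta q_n\asymp 1$ — is the same in both arguments and is left at the same level of detail: the paper invokes the standard justification of Theorem VI.1 of \cite{FS}, and your parenthetical ``direct Hankel-contour estimate after the rescaling $1-v=w/m_n$'' is precisely that argument, so nothing is missing relative to the paper's own standard of rigor. (Minor remark in your favour: you use the correct Laplace transform $(1+\sqrt{2\mu})e^{-\sqrt{2\mu}}$ of $\xi$, whereas the paper's displayed formula $(1+\sqrt{2\lambda})e^{-2\lambda}$ contains a typo in the exponent.)
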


\begin{proof}
Proposition \ref{prop:layervol} gives, for any $(s,t) \in [0,1]^2$ with $s=t\sqrt{3-2t}$,
\[
\mathbb{E} \left[ s^{|B^{\bullet}_{r+1}(\mathbf{T}_{\infty}) \setminus B^{\bullet}_{r}(\mathbf{T}_{\infty})|} \middle| |\partial B^{\bullet}_{r+1}(\mathbf{T}_{\infty})| = q_n, |\partial B^{\bullet}_{r}(\mathbf{T}_{\infty})| = p_n \right] = t^{q_n-p_n} \frac{[u^{p_n}] \left( \varphi_t^{\{r\}}(u) \right)^{q_n}}{[u^{p_n}] \left( \varphi_1^{\{r\}}(u) \right)^{q_n}}.
\]
We can study the asymptotic behavior of the quantity $[u^{p_n}] \left( \varphi_t(u) \right)^{q_n}$ with standart analytic techniques:
\[
[u^{p_n}] \left( \varphi_t(u) \right)^{q_n} = \frac{1}{2i\pi} \oint_{\gamma} \frac{\varphi_t(z)^{q_n}}{z^{p_n+1}} dz
\]
where $\gamma$ is a small enough contour enclosing the origin. The function $\varphi_t$ being analytic in $\mathbb C \setminus [1,+\infty[$, it is possible to deform the contour $\gamma$ into a Henkel-type contour $\gamma_n$ without changing the value of the integral (the modulus of the integrand decreases exponentially fast for $|z|$ large). For $n \geq 1$, we can take $\gamma_n$ to be the reunion on the semi infinite line $-i/n + [1, +\infty[$, oriented from right to left, the semi circle $1 + \frac{1}{n} e^{i]\pi/2,3\pi/2[}$ oriented clockwise, and the semi infinite line $+ i/n + [1, +\infty[$ oriented from left to right (see Figure \ref{fig:contours} for an illustration). The change of variable $z \to 1 + z/p_n$ then gives
\[
[u^{p_n}] \left( \varphi_t(u) \right)^{q_n} = \frac{1}{2i\pi} \oint_{\gamma_{p_n}} \frac{\varphi_t(z)^{q_n}}{z^{p_n+1}} dz = \frac{1}{2i \pi p_n} \oint_{\mathcal H} \frac{\varphi_t(1 + z/p_n)^{q_n}}{(1+z/p_n)^{p_n+1}} dz
\]
where $\mathcal H$ is the Henkel contour, that is the reunion of the semi infinite line $-i + [0, +\infty[$, oriented from right to left, the semi circle $ e^{i]\pi/2,3\pi/2[}$ oriented clockwise, and the semi infinite line $ i + [0, +\infty[$ oriented from left to right (see Figure \ref{fig:contours} for an illustration).

\begin{figure}[ht!]
\begin{center}
\includegraphics[width=0.8\textwidth]{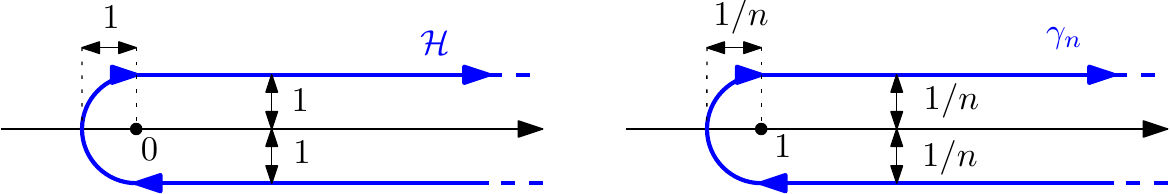}
\caption{\label{fig:contours} The contours $\mathcal H$ and $\gamma_n$.}
\end{center}
\end{figure}

\bigskip

From equation \eqref{eq:exprPhit} we have for $z \in \mathbb C \setminus [1, +\infty [$:
\[
\varphi_t(z) = 1 - \frac{t}{3-2t} (1-z) \left( 1+ \sqrt{1-z \frac{t}{3-2t}}\right)^{-2}.
\]
If $s_n = e^{-\lambda/n^4}$, then $t_n = 1 - \frac{\sqrt{2 \lambda/ 3}}{n^2} + \mathcal O (n^{-4})$, thus for $z \in \mathcal{H}$:
\begin{align*}
\varphi_{t_n} (1 + z/p_n) & = 1 + \frac{z}{p_n} \left( 1 - \frac{\sqrt{6 \lambda}}{n^2}+ \mathcal O (n^{-4}) \right) \cdot \left( 1 + \sqrt{ -\frac{z}{p_n} + \frac{\sqrt{6 \lambda}}{n^2}+ \mathcal O (n^{-4}) } \right)^{-2}, \\
& =  1+ \frac{z}{p_n} + \frac{2 z \left(\ell \sqrt{6 \lambda} - z\right)^{1/2}}{p_n^{3/2}} + \mathcal O (p_n^{-2}).
\end{align*}
Then we have, for $z \in \mathcal H$,
\[
\varphi_{t_n} (1 + z/p_n)^{q_n} = e^{z(1-\delta/\ell)} \left(1 + 2 (1-\delta/\ell) z \left(\ell \sqrt{6 \lambda} - z\right)^{1/2} p_n^{-1/2} + \mathcal{O} (p_n^{-1} )\right),
\]
giving
\[
\frac{\varphi_{t_n}(1 + z/p_n)^{q_n}}{(1+z/p_n)^{p_n+1}} = e^{- z \delta/\ell} \left(1 + 2 (1-\delta/\ell) z \left(\ell \sqrt{6 \lambda} - z\right)^{1/2} p_n^{-1/2} + \mathcal{O} (p_n^{-1} )\right).
\]
Since for any $\alpha \in \mathbb R$
\[
\frac{1}{2i\pi}\oint_{\mathcal{H}} (-z)^{\alpha} e^{-z} dz = \frac{1}{\Gamma(-\alpha)},
\]
we get, at least on a formal level,
\begin{align}
\label{eq:formalsingularity}
[u^{p_n}] \left( \varphi_t(u) \right)^{q_n} &= \frac{2 (1-\delta/\ell)}{2i\pi p_n^{3/2}} \oint_{\mathcal{H}} e^{- z \delta/\ell} z (\ell \sqrt{6\lambda} - z)^{1/2} dz + \mathcal{O}({p_n}^{-2}), \notag\\
&= \frac{2 (1-\delta/\ell)}{2i\pi p_n^{3/2} \delta/\ell} e^{- \delta \sqrt{6\lambda}}
 \left( \frac{\ell}{\delta} \right)^{3/2} \oint_{\mathcal{H}} e^{- z} \left( \delta \sqrt{6\lambda} + z \right) \left(-z \right)^{1/2} dz+ \mathcal{O}(p_n^{-2}) \notag,\\
&= \frac{2 (1-\delta/\ell)}{p_n^{3/2}} \left( \frac{\ell}{\delta} \right)^{5/2} e^{- \delta \sqrt{6\lambda}}
\left( \frac{\delta \sqrt{6\lambda}}{\Gamma(-1/2)} + \frac{-1}{\Gamma(-3/2)} \right) + \mathcal{O}(p_n^{-2})\notag,\\
& = \frac{2 (1-\delta/\ell)}{p_n^{3/2}} \left( \frac{\ell}{\delta} \right)^{5/2} e^{- \delta \sqrt{6\lambda}}
\left( \frac{- \delta \sqrt{6\lambda}}{\frac{3}{2}\Gamma(-3/2)} + \frac{-1}{\Gamma(-3/2)} \right) + \mathcal{O}(p_n^{-2}).
\end{align}
The justification of the formal argument used to derive \eqref{eq:formalsingularity} is quite standart in analytic combinatorics. For example it is identical to the one done in the proof of Theorem VI.1 of \cite{FS}.

This asymptotic expansion yields
\[
\frac{[u^{p_n}] \left( \varphi_{t_n}(u) \right)^{q_n}}{[u^{p_n}] \left( \varphi_1(u) \right)^{q_n}} \xrightarrow[n\to \infty]{} e^{- \delta \sqrt{6\lambda}}
\left(\frac{2}{3} \delta \sqrt{6\lambda} + 1 \right),
\]
and
\begin{align*}
\mathbb{E} & \left[ \exp \left( - \frac{\lambda}{n^4} {|B^{\bullet}_{r+1}(\mathbf{T}_{\infty}) \setminus B^{\bullet}_{r}(\mathbf{T}_{\infty})|} \right) \middle| |\partial B^{\bullet}_{r+1}(\mathbf{T}_{\infty})| = q_n, |\partial B^{\bullet}_{r}(\mathbf{T}_{\infty})| = p_n \right] \\
& \quad \quad =
t_n^{q_n-p_n} \frac{[u^{p_n}] \left( \varphi_{t_n}(u) \right)^{q_n}}{[u^{p_n}] \left( \varphi_1(u) \right)^{q_n}} \xrightarrow[n\to \infty]{} e^{- \frac{2}{3} \delta \sqrt{6\lambda}}
\left(\frac{2}{3} \delta \sqrt{6\lambda} + 1 \right),
\end{align*}
finally giving the result since the Laplace transform of $\xi$ is given by
\[
\mathbb E \left[ e^{- \lambda \xi} \right] = (1 + \sqrt{2 \lambda}) e^{-2 \lambda}
\]
for every $\lambda > 0$.
\end{proof}

We are now ready to prove Theorem \ref{th:hullproc}.
\begin{proof}[Proof of Theorem \ref{th:hullproc}]
With the help of Corollary \ref{cor:hulldiff}, the proof of this result is similar to the proof of Theorem 1 in \cite{CLG}. First, we can restrict the time interval to $[0,1]$ and verify that
\begin{equation*}
\left(R^{-2} |\partial B^{\bullet}_{\lfloor x R \rfloor}(\mathbf{T}_{\infty})|, R^{-4}|B^{\bullet}_{\lfloor x R \rfloor}(\mathbf{T}_{\infty})| \right)_{x \in [0,1]}
\xrightarrow[R \to \infty]{(d)}
\left(3^2 \cdot \mathcal L_x,4 \cdot 3^3 \cdot \mathcal M_x\right)_{x \in [0,1]}.
\end{equation*}
The convergence of the first component 
\begin{equation}
\label{eq:cvperim}
\left(R^{-2} |\partial B^{\bullet}_{\lfloor x R \rfloor}(\mathbf{T}_{\infty})|\right)_{x \in [0,1]}
\xrightarrow[R \to \infty]{(d)}
\left(3^2 \cdot \mathcal L_x\right)_{x \in [0,1]}.
\end{equation}
is already proved in \cite{Kr} via the skeleton decomposition and in \cite{CLG} via the peeling process. Therefore, we will study the second component given the first one.

For every $r \geq 1$ we can write
\[
V_r := |B^{\bullet}_{r}(\mathbf{T}_{\infty})| = 1 + \sum_{i=1}^r U_i
\]
where, for every $i \geq 1$,
\[
U_i = |B^{\bullet}_{i}(\mathbf{T}_{\infty}) \setminus B^{\bullet}_{i-1}(\mathbf{T}_{\infty})|.
\]
Fix $\varepsilon > 0$ and $R > 0$, Corollary \ref{cor:hulldiff} suggests to introduce, for $r \in \{ 1, \ldots ,R \}$,
\[
V_r^{> \varepsilon} = \sum_{i=1}^r U_i \mathbf{1}_{\left\{ P_i < P_{i-1} - \varepsilon R^2 \right\}}, \quad V_r^{\leq \varepsilon} = \sum_{i=1}^r U_i \mathbf{1}_{\left\{ P_i \geq P_{i-1} - \varepsilon R^2 \right\}},
\]
where $P_i = |\partial B_i^{\bullet} (\mathbf{T}_{\infty})|$ for every $i \geq 1$.

\bigskip

Let us first show that $R^{-4} V_R^{\leq \varepsilon}$ is small uniformly in $R$ when $\varepsilon$ is small. We will proceed with a first moment argument, and a first step is to give a bound on the expectation of $U_i$ conditionnaly on the event $\{P_{i-1} =p\}$, for $i \geq 1$. Fix $p,q \geq 1$ and let $F$ be a $(1,p,q)$-admissible forest. Recall that the spatial Markov property of the UIPT states that, conditionnaly on the event $\{\mathrm{Skel}(L^{\bullet}_{i-1,i} (\mathbf{T}_\infty)) = F\}$, the layer $L^{\bullet}_{i-1,i} (\mathbf{T}_\infty)$ is composed of its down triangles and a collection of indenpendent Boltzmann triangulations $\left(\mathbf T^{(c_v +2)}\right)_{v \in F^\star}$. There exists a universal constant $C >0$ such that, for any integer $p \geq 1$, one has
\[
\mathbb E \left[ |\mathbf T^{(p)}| \right] \leq C \, p^2
\]
(see for example \cite{CLG}, Proposition 8) and therefore
\[
\mathbb E \left[ U_i \middle| \mathrm{Skel}(L^{\bullet}_{i-1,i} (\mathbf{T}_\infty)) = F\right] \leq  C \sum_{v \in F^\star} (c_v + 2)^2 = C \sum_{v \in F^\star} c_v^2 + 4C \, q +2 \, C \, p.
\]
Using Lemma \ref{lem:lawhull}, we get, for every $i \geq 2$ and $p \geq 1$,
\begin{align*}
\mathbb E \left[ U_i \, \mathbf{1}_{\left\{P_i = q\right\}} \middle| P_{i-1} = p \right] &=
\sum_{F \in \mathcal{F}(1,p,q)} \mathbb E \left[ U_i \middle| \mathrm{Skel}(L^{\bullet}_{i-1,i} (\mathbf{T}_\infty)) = F\right] \cdot \frac{\alpha^q C(q)}{\alpha^p C(p)}  \prod_{v \in F^\star} [u^{c_v}] \varphi\\
& \leq  \frac{\alpha^q C(q)}{\alpha^p C(p)} \frac{p}{q} \left( \left( 4C\, q +2 \, C \, p \right) [u^p] \varphi^q
+ C \sum_{n_1 + \cdots n_q = p} \, \sum_{i=1}^q n_i^2 \prod_{i=1}^q [u^{n_i}] \varphi \right).
\end{align*}
The sum on the right hand side of the last equation is exactly
\[
\mathbb E \left[ (N_1^2 + \cdots + N_q^2) \mathbf{1}_{\left\{ N_1 + \cdots + N_q =p \right\}}  \right] = q \mathbb E \left[ N_1^2 \mathbf{1}_{\left\{ N_1 + \cdots +N_q =p \right\}}  \right],
\]
where $N_1, \ldots , N_q$ are independent random variables distributed according to $\varphi$. We have
\[
\mathbb E \left[ N_1^2 \mathbf{1}_{\left\{ N_1 + \cdots +N_q =p \right\}} \right] = [u^{p-2}] \left( \varphi'' \varphi^{q-1} \right) + \frac{p}{q} [u^p] \varphi^q
\]
for every $p\geq 2$, yiedling
\begin{equation}
\label{eq:espU}
\mathbb E \left[ U_i \, \mathbf{1}_{\left\{P_i = q\right\}} \middle| P_{i-1} = p \right]
\leq 
\frac{\alpha^q C(q)}{\alpha^p C(p)} \frac{p}{q} \left(
\left( 4C \, q + 3 \, C \, p \right) [u^p] \varphi^q
+ C \cdot q [u^{p-2}] \left( \varphi'' \varphi^{q-1} \right)
\right).
\end{equation}

Using
\[
\sum_{q \geq 1} \frac{1}{q} \alpha^q C(q) u^q = C' \cdot \left( \left(1 - u \right)^{-1/2} - 1 \right)
\]
for some $C' > 0$ we get
\[
\mathbb E \left[ U_i \middle| P_{i-1} = p \right]
\leq 
\frac{p}{\alpha^p C(p)} \left(
C_1 [u^p] \left( \frac{\varphi}{(1-\varphi)^{3/2}} \right) + C_2 \cdot p [u^p] \left(1 - \varphi \right)^{-1/2}  + C_3 [u^{p-2}] \left( \frac{\varphi''}{(1-\varphi)^{3/2}}\right)
\right)
\]
for every $p \geq 1$, where $C_1,C_2,C_3 > 0$ are fixed.
Using the fact that $1 - \varphi \sim 1-u$ and $\varphi'' \sim \frac{3}{2} (1-u)^{-1/2}$ as $u \to 1$, it is easy to see that
\begin{equation}
\label{eq:EUicond}
\mathbb E \left[ U_i \middle| P_{i-1} = p \right]
\leq
\frac{p}{\alpha^p C(p)} \, C_4 \, p
\end{equation}
for some constant $C_4 >0$. Therefore, if $p \leq 2 \varepsilon R^2$ and $\varepsilon$ is small enough, using the fact that $\frac{p}{\alpha^p C(p)} = \mathcal{O} (p^{1/2})$, we have
\begin{equation}
\label{eq:psmall}
\mathbb E \left[ U_i \middle| P_{i-1} = p \right]
\leq 
C_5 p^{3/2} \leq C'_5 \, R^3 \, \varepsilon
\end{equation}
where $C_5, C'_5 > 0$ are some fixed constants.

If, on the other hand $K > 1$, equations \eqref{eq:EUicond} and \eqref{eq:lawPerim} give
\begin{align*}
\mathbb E \left[ U_i \, \mathbf{1}_{\{P_{i-1} \geq K \, R^2\}} \right]
& \leq 
C_6 \sum_{p \geq K \, R^2} p^2 \left(1-\frac{1}{i^2}\right)^{p-1} \frac{1}{i^3},\\
&\leq C'_6 \, i \sum_{p \geq K \, R^2} \frac{p^2}{i^4} e^{-p/i^2}.
\end{align*}
The function $u \mapsto u^2 e^{-u}$ being decreasing for $u > 2$, the last inequality transforms to
\begin{align*}
\mathbb E \left[ U_i \, \mathbf{1}_{\{P_{i-1} \geq K \, R^2\}} \right]
&\leq C'_6 \, i \int_{u \geq K \, R^2} \frac{u^2}{i^4} e^{-u/i^2} \, du,\\
&\leq C'_6 \, i^3 \int_{u \geq \frac{K \, R^2}{i^2}} u^2 e^{-u} \, du.
\end{align*}
Since we only consider $i \in \{1, \ldots, R \}$, we have
\begin{align}
\label{eq:Plarge}
\mathbb E \left[ U_i \, \mathbf{1}_{\{P_{i-1} \geq K \, R^2\}} \right]
&\leq C'_6 \, i^3 \int_{u \geq K} u^2 e^{-u} \, du. \notag\\
& \leq C_7 \, R^3 \, K^2 e^{-K} \leq C'_7 \, R^3 \, \varepsilon
\end{align}
for $K = \varepsilon^{-1}$ and $\varepsilon$ small enough.

\bigskip

Finally, if $p = \lfloor \ell R^2 \rfloor$ for some $\ell \in [2 \varepsilon, K]$, we have using \eqref{eq:espU}:
\begin{align}
\label{eq:mediump}
\mathbb E & \left[ U_i \, \mathbf{1}_{\left\{P_i \geq p -\varepsilon R^2\right\}} \middle| P_{i-1} = p \right] \notag\\
& \quad \quad \leq C_1 p^{1/2} [u^p] \frac{\varphi^{p +1 - \lfloor \varepsilon R^2 \rfloor}}{(1 - \varphi)^{3/2}} + C_2 p^{3/2} [u^p] \frac{\varphi^{p - \lfloor \varepsilon R^2 \rfloor}}{(1 - \varphi)^{1/2}} + C_3 p^{1/2} [u^{p-2}] \frac{\varphi^{p - \lfloor \varepsilon R^2 \rfloor} \varphi''}{(1 - \varphi)^{3/2}}
\end{align}
where we also used the fact that
\[
\frac{1}{p} \alpha^p C(p) \sim_{p \to \infty} \frac{C}{p^{1/2}}.
\]
The same methods of singularity analysis than the ones used in the proof of Corollary \ref{cor:slicelim} give, as $R \to \infty$,
\begin{align*}
\left[u^{\lfloor \ell R^2 \rfloor}\right] \frac{\varphi^{\lfloor \ell R^2 \rfloor + 1 - \lfloor \varepsilon R^2 \rfloor}}{(1 - \varphi)^{3/2}}
& \sim
\frac{1}{2i \pi \ell R^2} \oint_{\mathcal{H}} e^{-z \frac{\varepsilon}{\ell}} \left( \frac{-z}{\ell R^2}\right)^{-{3/2}} \, dz = \frac{R \, \varepsilon^{1/2}}{\Gamma(3/2)},\\
\left[u^{\lfloor \ell R^2 \rfloor}\right] \frac{\varphi^{\lfloor \ell R^2 \rfloor - \lfloor \varepsilon R^2 \rfloor}}{(1 - \varphi)^{1/2}}
& \sim
\frac{1}{2i \pi \ell R^2} \oint_{\mathcal{H}} e^{-z \frac{\varepsilon}{\ell}} \left( \frac{-z}{\ell R^2}\right)^{-1/2} \, dz
= \frac{1}{\Gamma (1/2) \, R \, \varepsilon^{1/2}},\\
\left[u^{\lfloor \ell R^2 \rfloor -2}\right] \frac{\varphi^{\lfloor \ell R^2 \rfloor - \lfloor \varepsilon R^2 \rfloor} \varphi''}{(1 - \varphi)^{3/2}}
& \sim
\frac{3}{4i \pi \ell R^2} \oint_{\mathcal{H}} e^{-z \frac{\varepsilon}{\ell}} \left( \frac{-z}{\ell R^2}\right)^{-2} \, dz = \frac{3 \, R^2 \, \varepsilon}{2 \Gamma(2)}.
\end{align*}
These last three asymptotic behaviors and \eqref{eq:mediump} finally give, for any $p \in [2 \varepsilon R^2, K\, R^2]$,
\begin{align}
\label{eq:pmedium}
\mathbb E & \left[ U_i \, \mathbf{1}_{\left\{P_i \geq p -\varepsilon R^2\right\}} \middle| P_{i-1} = p \right]
\leq C'_1 \, \varepsilon^{1/2} \, K^{1/2} \, R^2 + C'_2 \, \varepsilon^{-1/2} K^{3/2} \, R^2 + C'_3 \, \varepsilon \, K^{1/2} \, R^3.
\end{align} 

\bigskip
Combining \eqref{eq:psmall}, \eqref{eq:Plarge} and \eqref{eq:pmedium} give, for every $R$,
\[
R^{-4} \mathbb E \left[ V_R^{\leq \varepsilon} \right] \leq C \, \varepsilon^{1/2},
\]
and thus, for every $\delta >0$, we have
\begin{equation}
\label{eq:vrepsilon}
\sup_{R\geq 1} \,
\mathbb P \left( \sup_{x \in [0,1]} \left|R^{-4} V_{\lfloor x R \rfloor} - R^{-4} V_{\lfloor x R \rfloor}^{\leq \varepsilon} \right| > \delta \right)
\xrightarrow[\varepsilon \to 0]{} 0.
\end{equation}

\bigskip

We now turn to $V_r^{> \varepsilon}$ and use the reasoning of the proof of Theorem 1 of \cite{CLG} (we give the full reasoning for the sake of completness). Denote by $x_1, x_2, \ldots $ the jump times of $\mathcal L$ before time $1$. For every $r \geq 1$, let $\ell_1^{(r)}, \ldots , \ell_r^{(r)}$ be the integers $i \in \{1, \ldots ,r \}$ listed in increasing order of the quantities $P_i - P_{i-1}$ (and the usual order of $\mathbb N$ for indices such that $P_i - P_{i-1}$ is equal to a given value). It follows from the convergence \eqref{eq:cvperim} that, for every integer $K \geq 1$,
\begin{align}
\label{eq:jumps}
&\left(
R^{-1} \ell_1^{(R)}, \ldots , R^{-1} \ell_K^{(R)},
R^{-2} \left( P_{\ell_1^{(R)}} - P_{\ell_1^{(R)}-1} \right), \ldots,
R^{-2} \left( P_{\ell_K^{(R)}} - P_{\ell_K^{(R)}-1} \right)
\right) \notag \\
& \quad \quad \quad
\xrightarrow[R \to \infty]{(d)}
\left(
x_1, \ldots, x_K,
3^2 \cdot \Delta \mathcal L_{x_1}, \ldots,
3^2 \cdot \Delta \mathcal L_{x_K} \right),
\end{align}
and this convergence holds jointly with the convergence \eqref{eq:cvperim}. In addition, using Corollary \ref{cor:hulldiff}, we also get
\begin{equation}
\label{eq:jumpsvolcv}
\left( 
\frac{U_{\ell_1^{(R)}}}{\left( P_{\ell_1^{(R)}} - P_{\ell_1^{(R)}-1}\right)^2}, \cdots,
\frac{U_{\ell_K^{(R)}}}{\left( P_{\ell_K^{(R)}} - P_{\ell_K^{(R)}-1}\right)^2}
\right)
\xrightarrow[R \to \infty]{(d)}
\left(4 \cdot 3^3 \cdot \xi_1, \ldots,  4 \cdot 3^3 \cdot \xi_K \right),
\end{equation}
jointly with the convergences \eqref{eq:cvperim} and \eqref{eq:jumps},
where the random variables $\xi_i$ are independent copies of the random variable $\xi$ of Corollary \ref{cor:hulldiff}, and independent of the process $\mathcal L$.

Chosing $K$ sufficiently large such that the probability of $|\Delta \mathcal L _{x_K}| <  \varepsilon / (2 \cdot 3^2)$ is close to $1$, we can combine \eqref{eq:jumps} and \eqref{eq:jumpsvolcv} to obtain the joint convergence
\[
\left(R^{-2} P_{\lfloor R x \rfloor}, R^{-4} V_{\lfloor R x \rfloor}^{> \varepsilon} \right)_{x \in [0,1]}
\xrightarrow[R \to \infty]{(d)}
\left(3^2 \cdot \mathcal L_x,4 \cdot 3^3 \cdot \mathcal M^{\varepsilon}_x\right)_{x \in [0,1]},
\]
where the process $\left( \mathcal M^{\varepsilon}_x\right)_{x \in [0,1]}$ is defined by
\[
\mathcal M^{\varepsilon}_x = 
\sum_{i \geq 1} \mathbf{1}_{\left\{x_i \leq x, \,  | \Delta \mathcal L _{x_i}| > \varepsilon / (2 \cdot 3^2) \right\}} \xi_i \left( \Delta \mathcal L_{x_i}\right)^2.
\]
It is easy to verify that, for every $\delta > 0$,
\[
\mathbb P \left( \sup_{x \in [0,1]} |\mathcal M_x - \mathcal M^{\varepsilon}_x| > \delta \right)
\xrightarrow[\varepsilon \to 0]{} 0
\]
and the final result follows from \eqref{eq:vrepsilon}.
\end{proof}

\section{Geodesic slices}
\label{sec:slices}

\subsection{Leftmost geodesics, slices and skeletons}

Fix $r > 0$ and $v \in \partial B_r^\bullet (\mathbf T_\infty)$. There are several geodesic paths from $v$ to the root vertex and we will distinguish a canonical one, called the leftmost geodesic. Informally, it is constructed from the following local rule: at each step, take the leftmost available neighbour that takes you closer to the root. More precisely, the vertex $v \in \partial B_r^\bullet (\mathbf T_\infty)$ is connected to several vertices of $\partial B_{r-1}^\bullet (\mathbf T_\infty)$ and we can enumerate them in clockwise order, starting from the first one after the edge of $\partial B_r^\bullet (\mathbf T_\infty)$ whose initial vertex is $v$. The first step of the leftmost geodesic from $v$ to the root vertex is the last edge appearing in this enumeration and the path is constructed by induction. Notice that the first step of the leftmost geodesic is an edge of the down triangle associated to the edge of $\partial B_r^\bullet (\mathbf T_\infty)$ on the left hand side of $v$ (see Figure \ref{fig:slice} for an illustration).

Now pick $v , v' \in \partial B_r^\bullet (\mathbf T_\infty)$, the two leftmost geodesics started respectively at $v$ and $v'$ will coalesce at a vertex denoted by $v \wedge v'$. The geodesic slice $\mathbf S(r,v,v')$ is the submap of $B_r^\bullet (\mathbf T_\infty)$ bounded by these two paths and the part of $\partial B_r^\bullet (\mathbf T_\infty)$ going from $v$ to $v'$ (recall that $\partial B_r^\bullet (\mathbf T_\infty)$ is oriented so that $B_r^\bullet (\mathbf T_\infty)$ lies on its right hand side). As a consequence of the definition of leftmost geodesics, the slice $\mathbf S(r,v,v')$ is completely described by the trees of the skeleton of $B_r^\bullet (\mathbf T_\infty)$ whose root lies, following the orientation of $\partial B_r^\bullet (\mathbf T_\infty)$, between $v$ and $v'$. Indeed, it is composed of the down triangles and the slots associated to the vertices of these trees. Figure \ref{fig:slice} contains an illustration of this fact.

\begin{figure}[ht!]
\begin{center}
\includegraphics[width=0.8\textwidth]{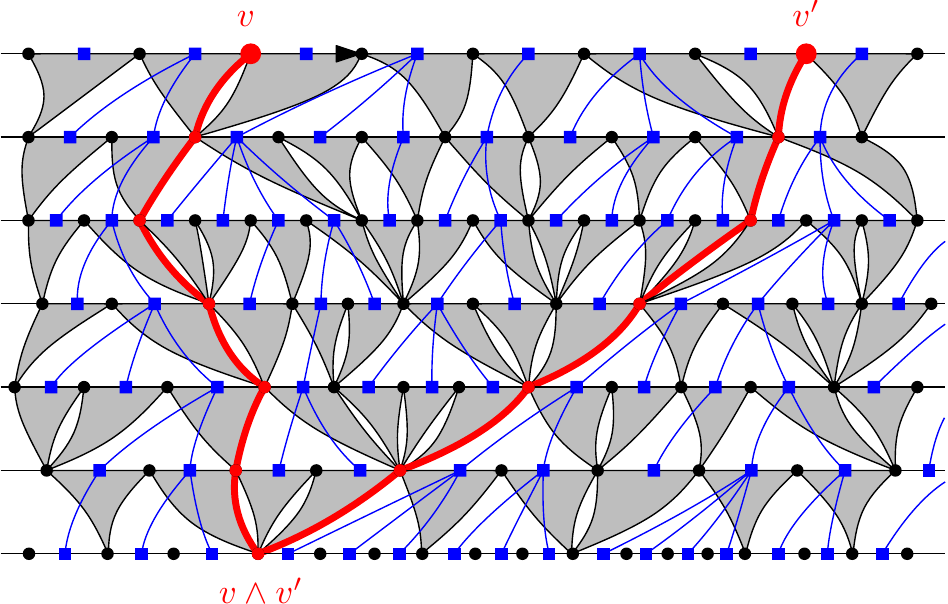}
\caption{\label{fig:slice}In red, two leftmost geodesic paths to the root started respectively at $v$ and $v'$, up to their coalescence point $v \wedge v'$. The geodesic slice $\mathbf{S}(r,v,v')$ is is the part of the map lying inside the two red paths and below the path joining $v$ and $v'$.}
\end{center}
\end{figure}

\subsection{Volume of slices}

\begin{proof}[Proof of Theorem \ref{th:slices}]
Since, for any $v,v' \in \partial B_r^\bullet (\mathbf T_\infty)$, the slice $\mathbf S(r,v,v')$ corresponds to the trees of $\mathrm{Skel} \left( B_r^\bullet (\mathbf T_\infty) \right)$ whose root lie between $v$ and $v'$, we need to identify these trees. Indeed, the first tree of $\mathrm{Skel} \left( B_r^\bullet (\mathbf T_\infty) \right)$ plays a special role (it is the only one of height $r$) and the geometry of the slice is not the same whether this tree is rooted between $v$ and $v'$ or not. Equivalently, this means that the slice containing the root vertex of $\mathbf T_{\infty}$ will play a special role.

We denote by $F=(\tau_1, \ldots , \tau_q)$ the skeleton of $B_r^\bullet (\mathbf T_\infty)$. Recall that it is an ordered forest, and more precisely a $(r,1,q)$-admissible forest. The vertex $v_1$ is the vertex on the left-hand side of the root of $\tau_i$ for some $i$ between $1$ and $q$, and the part of the skeleton describing the slice $\mathbf S(r,v_j,v_{j+1})$ is the ordered forest
\[F_{i,j}=(\tau_{i+q_1 + \cdots + q_{j-1}}, \ldots ,\tau_{i+q_1 + \cdots + q_{j}-1}),\]
where $\tau_{q+k} = \tau_k$ for every $k \in \{ 1,\ldots ,q\}$ (we also always set $v_{n+1}=v_1$). The vertex $v_1$ being chosen uniformly, this happens with probability $\frac{1}{q}$ for every $i \in \{ 1,\ldots q\}$.

Now, fix $\Delta$ a triangulation of the $(r,1,q)-$cylinder with skeleton $F = (\tau_1,\ldots,\tau_q)$. For $v,v'\in \partial \Delta$, we denote by $\Delta(v,v')$ the geodesic slice define by the arc from $v$ to $v'$ and the two leftmost geodesic started respectively ar $v$ and $v'$. If $v_1$ chosen uniformly and then $(v_2,\ldots,v_n)$ are such that the length of the arc for $v_j$ to $v_{j+1}$ along $\partial \Delta$ has length $q_j$, then, for any $s_1,\ldots ,s_n \in [0,1]$
\[
\mathbb{E} \left[ \prod_{j=1}^n s_j^{|\Delta(v_j,v_{j+1})| - d(v_j,v_j \wedge v_{j+1}) -1}\right] = \frac{1}{q} \sum_{i=1}^{q} \, \prod_{j=1}^n \prod_{v \in F_{i,j}^\star} s_j^{n_v + 1},
\]
where the expectation takes into account only the randomness of $v_1$ (the map $\Delta$ is deterministic here). This is where it is easier to consider $|\Delta(v_j,v_{j+1})| - d(v_j,v_j \wedge v_{j+1}) -1$ instead of simply $|\Delta(v_j,v_{j+1})|$. Indeed, in the previous formula, the terms $s^{n_v +1}$ count the number of inner vertices in blocs as well as the top vertex of each block. This means that every vertex of the leftmost geodesic on the right hand side of the slice is not counted explaining the deduction of $d(v_j,v_j \wedge v_{j+1}) + 1$ vertices in to size of the slice. In order to count this vertices we would have to keep track of the the height of each slice (namely $d(v_j,v_j \wedge v_{j+1})$). This is not much harder to do, but it leads to a much more complicated formula and does not have a lot of benefits.

Lemma \ref{lem:lawhull} gives
\begin{align*}
& \mathbb{E} \left[
\prod_{j=1}^n s_j^{|\mathbf S(r,v_j,v_{j+1})| - d(v_j,v_j \wedge v_{j+1}) -1}
\mathbf{1}_{\left\{ B_r^\bullet (\mathbf T_\infty) = \Delta \right\}}
\right]\\
& \quad \quad = \frac{\alpha^q C(q)}{\alpha C(1)} \frac{1}{q} \sum_{i=1}^{q} \,
\prod_{j=1}^n
\, \prod_{v \in F_{i,j}^\star} \alpha^{c(v) -1} (\rho s_j)^{n_v + 1},\\
& \quad \quad = \frac{\alpha^q C(q)}{\alpha C(1)} \frac{1}{q} \sum_{i=1}^{q} \,
\prod_{j=1}^n
\, t_j^{q_j - \mathbf{1}_{\{\tau_1 \in F_{i,j}\}}} \prod_{v \in F_{i,j}^\star} \left( \alpha t_j \right)^{c_v -1} (\rho s_j)^{n_v + 1}.
\end{align*}
Summing over every triangulation $\Delta$ having $F$ as skeleton then gives
\begin{align*}
& \mathbb{E} \left[
\prod_{j=1}^n s_j^{|\mathbf S(r,v_j,v_{j+1})| - d(v_j,v_j \wedge v_{j+1}) -1}
\mathbf{1}_{\left\{ \mathrm{Skel} \left( B_r^\bullet (\mathbf T_\infty) \right) = F \right\}}
\right] = \frac{\alpha^q C(q)}{\alpha C(1)} \frac{1}{q} \sum_{i=1}^{q} \,
\prod_{j=1}^n
\, t_j^{q_j -\mathbf{1}_{\{\tau_1 \in F_{i,j}\}}} \prod_{v \in F_{i,j}^\star}
[u^{c_v}] \varphi_{t_j} (u).
\end{align*}
Finally summing over every admissible forest yields
\begin{align*}
& \mathbb{E} \left[
\prod_{j=1}^n s_j^{|\mathbf S(r,v_j,v_{j+1})| - d(v_j,v_j \wedge v_{j+1}) -1}
\mathbf{1}_{\left\{ |\partial B_r^\bullet (\mathbf T_\infty) | = q \right\}}
\right] \\
& \quad \quad = \frac{\alpha^q C(q)}{\alpha C(1)} \frac{1}{q} 
\sum_{F \in \mathcal{F}(r,1,q)} \,
\sum_{i=1}^{q} \,
\prod_{j=1}^n
\,
t_j^{q_j -\mathbf{1}_{\{\tau_1 \in F_{i,j}\}}} \prod_{v \in F_{i,j}^\star}
[u^{c_v}] \varphi_{t_j} (u),\\
& \quad \quad = \frac{\alpha^q C(q)}{\alpha C(1)} \frac{1}{q} 
\sum_{F \in \mathcal{F}'(r,1,q)} \,
\prod_{j=1}^n
\, t_j^{q_j -\mathbf{1}_{\{h(F_{1,j})=r\}}} \prod_{v \in F_{1,j}^\star}
[u^{c_v}] \varphi_{t_j} (u),
\end{align*}
where $h(\cdot)$ denotes the maximal height of a forest. If, for $k \geq 1$, we denote by $\mathcal{F}_k^{r}$ the set of all ordered forests of $k$ trees with maximal height stricly less than $r$, we get
\begin{align*}
& \mathbb{E} \left[
\prod_{j=1}^n s_j^{|\mathbf S(r,v_j,v_{j+1})| - d(v_j,v_j \wedge v_{j+1}) -1}
\mathbf{1}_{\left\{ |\partial B_r^\bullet (\mathbf T_\infty) | = q \right\}}
\right] \\
& \quad \quad = \frac{\alpha^q C(q)}{\alpha C(1)} \frac{1}{q}
\sum_{k=1}^n
\left(
t_k^{q_k - 1} \sum_{F_{1,k} \in \mathcal{F}'(r,1,q_k)} \, \prod_{v \in F_{1,k}^\star} [u^{c_v}] \varphi_{t_k} (u)
\right) \times
\prod_{j\neq k}
\left( t_j^{q_j}
\sum_{F_{1,j} \in \mathcal{F}^r_{q_j}} \, \prod_{v \in F_{1,j}} [u^{c_v}] \varphi_{t_j} (u) \right),\\
& \quad \quad = \frac{\alpha^q C(q)}{\alpha C(1)} \frac{1}{q} \, 
\left(
\prod_{j=1}^n \left(t_j \, \varphi_{t_j}^{\{r\}}(0) \right)^{q_j}
\right) \times
\sum_{k=1}^n \frac{1}{t_k} \, 
\frac{[u]\left(\varphi_{t_k}^{\{r\}}(u) \right)^{q_k}}{\left(\varphi_{t_k}^{\{r\}}(0) \right)^{q_k}},
\\
& \quad \quad = \frac{\alpha^q C(q)}{\alpha C(1)} \frac{1}{q} \, 
\left(
\prod_{j=1}^n \left(t_j \, \varphi_{t_j}^{\{r\}}(0) \right)^{q_j}
\right) \times
\sum_{k=1}^n  \frac{q_k}{t_k} \, 
\frac{\varphi_{t_k}^{\{r\}'}(0)}{\varphi_{t_k}^{\{r\}}(0)}.
\end{align*}
Finally we have
\begin{align*}
& \mathbb{E} \left[
\prod_{j=1}^n s_j^{|\mathbf S(r,v_j,v_{j+1})| - d(v_j,v_j \wedge v_{j+1}) -1}
\middle|  |\partial B_r^\bullet (\mathbf T_\infty) | = q
\right] \\
& \qquad \qquad \qquad =
\left(
\prod_{j=1}^n \left(t_j \, \frac{\varphi_{t_j}^{\{r\}}(0)}{\varphi^{\{r\}}(0)} \right)^{q_j}
\right) \times
\sum_{k=1}^n \frac{q_k}{q} \frac{1}{t_k} \, 
\frac{\varphi_{t_k}^{\{r\}'}(0)}{\varphi^{\{r\}'}(0)}
\frac{\varphi^{\{r\}}(0)}{\varphi_{t_k}^{\{r\}}(0)}
\end{align*}
giving the result.
\end{proof}

\begin{proof}[Proof of Corollary \ref{cor:slicelim}]
This is a direct consequence of Theorem \ref{th:slices} using the same asymptotics as in the proof of Corollary \ref{cor:hullcond}.
\end{proof}

\bibliographystyle{plain}
\bibliography{biblio}

\def\cprime{$'$}
\begin{thebibliography}{10}

\bibitem{Ab}
C\'eline Abraham.
\newblock Rescaled bipartite planar maps converge to the brownian map.
\newblock {\em Ann. Inst. Henri Poincar\'e Probab. Stat.}, to appear.

\bibitem{AA}
Louigi Addario-Berry and Marie Albenque.
\newblock The scaling limit of random simple triangulations and random simple
  quadrangulations.
\newblock {\em Preprint}, 2013, arXiv:1306.5227.

\bibitem{AW}
J.~Ambj{\o}rn and Y.~Watabiki.
\newblock Scaling in quantum gravity.
\newblock {\em Nuclear Phys. B}, 445(1):129--142, 1995.

\bibitem{ADJ}
Jan Ambj{\o}rn, Bergfinnur Durhuus, and Thordur Jonsson.
\newblock {\em Quantum geometry}.
\newblock Cambridge Monographs on Mathematical Physics. Cambridge University
  Press, Cambridge, 1997.
\newblock A statistical field theory approach.

\bibitem{A}
O.~Angel.
\newblock Growth and percolation on the uniform infinite planar triangulation.
\newblock {\em Geom. Funct. Anal.}, 13(5):935--974, 2003.

\bibitem{AC}
Omer Angel and Nicolas Curien.
\newblock Percolations on random maps {I}: {H}alf-plane models.
\newblock {\em Ann. Inst. Henri Poincar\'e Probab. Stat.}, 51(2):405--431,
  2015.

\bibitem{AS}
Omer Angel and Oded Schramm.
\newblock Uniform infinite planar triangulations.
\newblock {\em Comm. Math. Phys.}, 241(2-3):191--213, 2003.

\bibitem{BC}
Itai Benjamini and Nicolas Curien.
\newblock Simple random walk on the uniform infinite planar quadrangulation:
  subdiffusivity via pioneer points.
\newblock {\em Geom. Funct. Anal.}, 23(2):501--531, 2013.

\bibitem{BCK}
Jean Bertoin, Nicolas Curien, and Igor Kortchemski.
\newblock Random planar maps and growth fragmentations.
\newblock {\em Preprint}, 2015, arXiv:1507.02265.

\bibitem{BJM}
J{\'e}r{\'e}mie Bettinelli, Emmanuel Jacob, and Gr{\'e}gory Miermont.
\newblock The scaling limit of uniform random plane maps, {\it via} the
  {A}mbj\o rn-{B}udd bijection.
\newblock {\em Electron. J. Probab.}, 19:no. 74, 16, 2014.

\bibitem{BdFG}
J.~Bouttier, P.~Di~Francesco, and E.~Guitter.
\newblock Planar maps as labeled mobiles.
\newblock {\em Electron. J. Combin.}, 11(1):Research Paper 69, 27, 2004.

\bibitem{B}
Timothy Budd.
\newblock {The peeling process of infinite Boltzmann planar maps}.
\newblock {\em Preprint}, 2015, arXiv:1506.01590.

\bibitem{CS}
Philippe Chassaing and Gilles Schaeffer.
\newblock Random planar lattices and integrated super{B}rownian excursion.
\newblock {\em Probab. Theory Related Fields}, 128(2):161--212, 2004.

\bibitem{CV}
Robert Cori and Bernard Vauquelin.
\newblock Planar maps are well labeled trees.
\newblock {\em Canad. J. Math.}, 33(5):1023--1042, 1981.

\bibitem{C}
Nicolas Curien.
\newblock A glimpse of the conformal structure of random planar maps.
\newblock {\em Comm. Math. Phys.}, 333(3):1417--1463, 2015.

\bibitem{CLGfpp}
Nicolas Curien and Jean-Fran{\c{c}}ois Le~Gall.
\newblock {First-passage percolation and local modifications of distances in
  random triangulations}.
\newblock {\em Preprint}, 2015, arXiv:1511.04264.

\bibitem{CLG}
Nicolas Curien and Jean-Fran{\c{c}}ois Le~Gall.
\newblock {Scaling limits for the peeling process on random maps}.
\newblock {\em Ann. Inst. Henri Poincar\'e Probab. Stat.}, to appear.

\bibitem{CLGplane}
Nicolas Curien and Jean-Fran{\c{c}}ois Le~Gall.
\newblock {The hull process of the Brownian plane}.
\newblock {\em Probab. Theory Related Fields}, to appear.

\bibitem{FS}
Philippe Flajolet and Robert Sedgewick.
\newblock {\em Analytic combinatorics}.
\newblock Cambridge University Press, Cambridge, 2009.

\bibitem{Kr}
Maxim Krikun.
\newblock {Uniform infinite planar triangulation and related time-reversed
  critical branching process}.
\newblock {\em Journal of Mathematical Sciences}, 131(2):5520--5537, 2005.

\bibitem{K}
Maxim Krikun.
\newblock {Local structure of random quadrangulations}.
\newblock {\em Preprint}, 2005, arXiv:math/0512304v2.

\bibitem{LG}
J.-F. Le~Gall.
\newblock The {B}rownian map: a universal limit for random planar maps.
\newblock In {\em X{VII}th {I}nternational {C}ongress on {M}athematical
  {P}hysics}, pages 420--428. World Sci. Publ., Hackensack, NJ, 2014.

\bibitem{LGbm}
Jean-Fran{\c{c}}ois Le~Gall.
\newblock Uniqueness and universality of the {B}rownian map.
\newblock {\em Ann. Probab.}, 41(4):2880--2960, 2013.

\bibitem{MN}
Laurent M{\'e}nard and Pierre Nolin.
\newblock Percolation on uniform infinite planar maps.
\newblock {\em Electron. J. Probab.}, 19:no. 79, 2014.

\bibitem{Mibm}
Gr{\'e}gory Miermont.
\newblock The {B}rownian map is the scaling limit of uniform random plane
  quadrangulations.
\newblock {\em Acta Math.}, 210(2):319--401, 2013.

\bibitem{Mi}
Gregory Miermont.
\newblock {\em Aspects of random planar maps}.
\newblock Saint Flour Lecture Notes, in preparation, 2014.

\bibitem{MS}
Jason Miller and Scott Sheffield.
\newblock {An axiomatic characterization of the Brownian map}.
\newblock {\em Preprint}, 2015, arXiv:1506.03806.

\bibitem{R}
Loic Richier.
\newblock Universal aspects of critical percolation on random half-planar maps.
\newblock {\em Electron. J. Probab.}, 20:no. 129, 2015.

\bibitem{Sch}
Gilles Schaeffer.
\newblock Conjugaison d’arbres et cartes combinatoires al\'eatoires.
\newblock {\em PhD thesis, Universit\'e Bordeaux I}, 1998.

\bibitem{W}
Yoshiyuki Watabiki.
\newblock Construction of non-critical string field theory by transfer matrix
  formalism in dynamical triangulation.
\newblock {\em Nuclear Phys. B}, 441(1-2):119--163, 1995.

\end{thebibliography}

\end{document}